\begin{document}
\renewcommand{\thepage}{\small\arabic{page}}
\renewcommand{\thefootnote}{(\arabic{footnote})}
\renewcommand{\thesection}{\arabic{chapter}.\arabic{section}}
\renewcommand{\thesubsection}{\arabic{subsection}}

\renewcommand\Affilfont{\small}

\author{Maria Trybu\l{}a}
\affil{Institute of Mathematics and Informatics\\Bulgarian Academy of Sciences\\ Akad. Georgi Bonchev 8, 1113 Sofia, Bulgaria\\ trybula@math.bas.bg}

\affil{Faculty of Mathematics and Informatics\\Adam Mickiewicz University in Pozna\'{n} \\ Uniwersytetu Pozna\'{n}skiego 4, 61-614 Pozna\'{n}, Poland\\ maria.h.trybula@gmail.com}

\title{Representation of Operators on Spaces of Holomorphic Functions in $\mathbb{C}^n$}
\maketitle

\begin{abstract}
We investigate operators between spaces of holomorphic functions in several complex variables. Let $G_1, G_2 \subset \mathbb{C}^n$ be cylindrical domains. We construct a canonical map from the space of bounded linear operators $\mathcal{L}(H(G_1), H(G_2))$ to $H(G_1^b \times G_2)$ and prove that it is a topological isomorphism (Theorem~\ref{pierwsze twierdzenie}). We then establish uniform estimates for operators on bounded, complete $n$-circled domains (Theorem~\ref{thm:4.8}) and show that sequences of operators on smaller domains satisfying suitable uniform bounds uniquely determine a global operator (Theorem~\ref{thm:4.9}). Together, these results provide a unified framework for representing and extending operators on spaces of holomorphic functions in several complex variables.

 \footnotetext[1]{{\em 2020 Mathematics Subject Classification.}
Primary: 46A32, 32A10, 46E10 Secondary: 32C35, 47A57.

{\em Key words and phrases:} Linear operator, Holomorphic function, $n$-circled domain.

The research was supported by National Science Fund (Bulgaria), grant no. KP-06-N82/6.}

\end{abstract}
\newtheorem{thm}{Theorem}[subsection]
\newtheorem{lemma}[thm]{Lemma}
\newtheorem{proposition}[thm]{Proposition}
\newtheorem{corollary}[thm]{Corollary}
\newtheorem{remark}[thm]{Remark}
\newtheorem*{example}{Example}

\subsection{Introduction}

In his seminal work, K\"{o}the \cite{Kothe} provided a complete description of the dual space of holomorphic functions on a domain $G \subset \mathbb{C}$, establishing a framework for representing bounded linear operators between spaces of holomorphic functions on planar domains. These discoveries have since become essential tools in various areas, including the study of Euler differential operators and Hadamard multiplication operators.

The first significant extension to several complex variables was given by Tillmann \cite{Tillmann}. While the complexity of domains in $\mathbb{C}^n$ and the absence of effective integral representation formulas limited the generality of his results, he established an important duality theorem for spaces of holomorphic functions on the Cartesian products of planar domains and their continuous linear functionals.

In this paper, we investigate bounded linear operators between spaces of holomorphic functions in several complex variables. Let $G_1, G_2 \subset \mathbb{C}^n$ be cylindrical domains. We construct a canonical map from $\mathcal{L}(H(G_1), H(G_2))$ to $H(G_1^b \times G_2)$ and show that it is a topological isomorphism of Fr\'{e}chet spaces, providing a concrete representation of bounded operators via holomorphic kernels (Theorem \ref{pierwsze twierdzenie}). We then establish uniform estimates for operators on bounded, complete $n$-circled domains, proving that the composition with canonical projections extends uniquely to bounded operators on the corresponding smaller domains (Theorem \ref{thm:4.8}), and we show that a sequence of bounded operators on these smaller domains satisfying suitable uniform bounds determines a unique global bounded operator whose restrictions coincide with the sequence (Theorem \ref{thm:4.9}). Together, these results provide a unified and coherent framework for the representation of bounded linear operators between spaces of holomorphic functions in several complex variables, generalizing classical results of K\"{o}the and offering new tools for operator-theoretic applications in complex analysis.
  
Notation and preliminary facts are collected in Section 2.

\subsection{Preliminaries}

We begin by fixing notation. The symbols $\mathbb{N}$, $\mathbb{R}$, $\mathbb{R}_{>0}$, $\mathbb{C}$, and $\hat{\mathbb{C}}$ denote, respectively, the natural numbers (excluding zero), the real numbers, the positive real numbers, the complex plane, and the Riemann sphere. For $A\subset\mathbb{C}$, its complement in $\hat{\mathbb{C}}$ is denoted $A^c$. For $n,j\in\mathbb{N}$, we write $\mathbf{1}_n:=(1,\ldots,1)$ ($n$ times) and $e_j:=(0,\ldots,0,1,0,\ldots,0)\in\mathbb{C}^n$ (with $1$ in the $j$th coordinate). The Euclidean norm of $z\in\mathbb{C}^n$ is $|z|$.

For $a\in\mathbb{C}$ and $r>0$, let $\mathbb{D}(a,r)$ denote the open disc with center $a$ and radius $r$; we write $\mathbb{D}(r)$ if $a=0$. For $a\in\mathbb{C}^n$ and $r\in\mathbb{R}_{>0}^n$, the polydisc with center $a$ and polyradius $r$ is
\[
\mathbb{P}_n(a,r) := \mathbb{D}(a_1,r_1)\times\cdots\times\mathbb{D}(a_n,r_n),
\]
and $\mathbb{P}_n(r)$ if $a=0$.

If $\Omega=\Omega_1\times\cdots\times\Omega_n\subset\mathbb{C}^n$, the set $\partial\Omega_1\times\cdots\times\partial\Omega_n$ is called the distinguished boundary of $\Omega$ and denoted $\partial_0\Omega$. If each $\partial\Omega_j$ is piecewise $\mathcal{C}^1$ and $f$ is continuous on $\partial_0\Omega$, we define
\[
\int_{\partial_0\Omega} \frac{f(\zeta)}{\zeta-z} \, d\zeta
:= \int_{\partial\Omega_1} \cdots \int_{\partial\Omega_n} 
\frac{f(\zeta_1,\ldots,\zeta_n)}{(\zeta_1-z_1)\cdots(\zeta_n-z_n)} \, 
d\zeta_1\cdots d\zeta_n, \quad z\in\Omega.
\]

For $a\in\mathbb{C}$ and $z=(z_1,\ldots,z_n)\in\mathbb{C}^n$, define $az := (az_1,\ldots,az_n)$. For $A\subset\mathbb{C}^n$, set $aA := \{az : z\in A\}$. We also use coordinatewise addition: for $z,w\in\mathbb{C}^n$, $z+w := (z_1+w_1,\ldots,z_n+w_n)$.

Let $\alpha\in (\mathbb{N}\cup\{0\})^n$ be a multi-index. We write
\[
\alpha! := \alpha_1!\cdots \alpha_n!, \qquad
|\alpha| := \alpha_1+\cdots+\alpha_n, \qquad
\zeta^\alpha := \zeta_1^{\alpha_1}\cdots \zeta_n^{\alpha_n}.
\]

For a compact set $K\subset\mathbb{C}^n$ and a bounded function $f:K\to\mathbb{C}$, the supremum norm is $\lVert f\rVert_K := \sup_{z\in K} |f(z)|$, and $\mathcal{C}(K)$ denotes the space of continuous functions on $K$ with this norm.

For an open set $\Omega\subset\hat{\mathbb{C}}^n$, $H(\Omega)$ denotes the space of holomorphic functions on $\Omega$, equipped with the topology of uniform convergence on compact sets. If $\Omega\not\subset \mathbb{C}^n$, functions in $H(\Omega)$ are assumed to vanish at any infinite point. For $S\subset\hat{\mathbb{C}}^n$, set
\[
H(S) := \bigcup_{\omega\supset S \text{ open}} H(\omega),
\]
the space of germs of holomorphic functions on $S$, endowed with the finest locally convex topology making all restriction maps $H(\omega)\to H(S)$ continuous.

Finally, $B(G)$ denotes the space of functions holomorphic on $G$ and extending continuously to $\partial G$. Equipped with the norm $\lVert\cdot\rVert_D$, $B(G)$ is a Banach space.

If $X$ and $Y$ are Fréchet spaces, $\mathcal{L}(X,Y)$ denotes the space of continuous linear operators from $X$ to $Y$. 

For further background on Functional Analysis and Complex Analysis not treated in this paper, we refer to \cite{Vogt} and \cite{Jarnicki}.

\subsection{Special case}
Denote by $\mathcal{G}_n$ the family of all cylindrical domains in $\mathbb{C}^n$ whose components are of finite connectivity and have closures in $\hat{\mathbb{C}}$  that are not equal to $\hat{\mathbb{C}}$ itself. The subfamily of domains in $\mathcal{G}_n$ in which every component has analytic boundary will be denoted by $\mathcal{G}_n^\omega$.
\begin{proposition}
If $G\in\mathcal{G}_n^\omega$, then $H(\overline{G})$ is dense in $B(G)$.
\end{proposition}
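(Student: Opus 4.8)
The plan is to reduce the assertion to the one-variable case and to invoke classical approximation theory there. Write $G=G_1\times\cdots\times G_n$, where each $G_j\subset\mathbb C$ has finite connectivity and analytic boundary and $\overline{G_j}\subsetneq\hat{\mathbb C}$. Two preliminary remarks. First, restriction carries $H(\overline G)$ continuously into $B(G)$: a function holomorphic on an open $\omega\supset\overline G$ is holomorphic on $G$ and continuous on $\overline G$, and uniform convergence on the compacta of $\omega$ forces uniform convergence on $\overline G$. Second, the algebraic tensor product $H(\overline{G_1})\otimes\cdots\otimes H(\overline{G_n})$ — spanned by the germs $z\mapsto g_1(z_1)\cdots g_n(z_n)$ with $g_j\in H(\overline{G_j})$ — lies inside $H(\overline G)$. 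Hence it suffices to prove that $H(\overline{G_1})\otimes\cdots\otimes H(\overline{G_n})$ is dense in $B(G)$. I intend to deduce this from: \emph{(i)} the identification $B(G)=B(G_1)\mathbin{\check{\otimes}}\cdots\mathbin{\check{\otimes}}B(G_n)$, the completed injective tensor product, realised inside $\mathcal C(\overline G)=\mathcal C(\overline{G_1})\mathbin{\check{\otimes}}\cdots\mathbin{\check{\otimes}}\mathcal C(\overline{G_n})$; and \emph{(ii)} the density of $H(\overline{G_j})$ in $B(G_j)$ for each $j$. Since dense subspaces $A_j\subset E_j$ yield a dense algebraic tensor product $A_1\otimes\cdots\otimes A_n$ in $E_1\mathbin{\check{\otimes}}\cdots\mathbin{\check{\otimes}}E_n$, statements (i) and (ii) together finish the argument.

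Step (ii) is the heart of the matter, and it is here that the hypothesis $G\in\mathcal G_n^\omega$ is used. Fix $j$, put $\Omega:=G_j$ and $K:=\overline\Omega$, so that $K$ is a compact subset of $\hat{\mathbb C}$, $K\neq\hat{\mathbb C}$, whose boundary is a finite disjoint union of analytic Jordan curves; thus $\hat{\mathbb C}\setminus K$ has only finitely many components and $\Omega=\operatorname{int}K$, whence $B(\Omega)=A(K)$, where $A(K):=\{h\in\mathcal C(K): h\ \text{is holomorphic on}\ \operatorname{int}K\}$. Because $\partial K$ is this regular, the classical rational-approximation theorem (Walsh for domains with analytic boundary; more generally the Mergelyan--Vitushkin theory) gives $A(K)=R(K)$, the closure in $\mathcal C(K)$ of the rational functions with all poles off $K$. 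Every such rational function is holomorphic on a neighbourhood of $K$ and therefore represents an element of $H(\overline\Omega)$; this proves (ii). (When $\Omega$ is simply connected the general theory can be avoided: the Riemann map $\varphi\colon\Omega\to\mathbb D$ extends biholomorphically across the analytic boundary, $g\circ\varphi^{-1}\in A(\mathbb D)$ for $g\in B(\Omega)$, the dilates $w\mapsto(g\circ\varphi^{-1})(rw)$ converge to $g\circ\varphi^{-1}$ uniformly on $\overline{\mathbb D}$ as $r\uparrow1$, and pulling them back by $\varphi$ yields approximants lying in $H(\overline\Omega)$.)

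Step (i) is soft. The inclusion $B(G_1)\otimes\cdots\otimes B(G_n)\subset B(G)$ is immediate, hence so is $B(G_1)\mathbin{\check{\otimes}}\cdots\mathbin{\check{\otimes}}B(G_n)\subset B(G)$. For the reverse inclusion, let $F\in B(G)$. I claim $F$ belongs separately to each closed subspace $B(G_j)\subset\mathcal C(\overline{G_j})$: freezing the coordinates $(z_i)_{i\neq j}$ at an arbitrary point of $\prod_{i\neq j}\overline{G_i}$, the slice $z_j\mapsto F(z)$ is continuous on $\overline{G_j}$ and holomorphic on $G_j$ — the latter by Morera's theorem, because for a small loop $\gamma\subset G_j$ the integral $\oint_\gamma F\,dz_j$ depends continuously on the frozen coordinates and vanishes when these lie in the dense set $\prod_{i\neq j}G_i$, hence vanishes throughout. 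By the standard description of the injective tensor product of closed subspaces of $\mathcal C(K_i)$-spaces — applicable because the algebras $B(G_j)$ possess the approximation property (classical for the disc algebra, and known for finitely connected domains with analytic boundary) — a function lying separately in all the $B(G_j)$ belongs to $B(G_1)\mathbin{\check{\otimes}}\cdots\mathbin{\check{\otimes}}B(G_n)$. This yields (i), and with (ii) the proposition follows.

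The one genuinely analytic ingredient is thus Step (ii): the uniform approximation, \emph{up to the boundary}, of a merely continuous function on $\overline\Omega$ by functions holomorphic on a neighbourhood of $\overline\Omega$. This is exactly where the analyticity of the boundary enters — the statement is false for general compact sets, and for multiply connected $\Omega$ it is not elementary. Everything else reduces to routine tensor-product and vector-valued bookkeeping.
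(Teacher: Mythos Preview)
Your argument is correct, but it proceeds quite differently from the paper. The paper gives a direct, elementary construction: for the simply connected case (each $G_j$ simply connected) it uses exactly the Riemann-map dilation you mention parenthetically, producing explicit approximants $f_r(z)=f\bigl(\varphi_1^{-1}(r\varphi_1(z_1)),\ldots,\varphi_n^{-1}(r\varphi_n(z_n))\bigr)\in H(\overline G)$; for the general case it applies the Cauchy integral over the product of boundary components to split $f\in B(G)$ as a finite sum $f=\sum_m f_m$ with each $f_m\in B(G(m))$, where $G(m)$ is a product of \emph{simply connected} domains with analytic boundary, and then invokes the first case. No rational approximation, no tensor products, no approximation property. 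By contrast, you first identify $B(G)$ with the completed injective tensor product $B(G_1)\check{\otimes}\cdots\check{\otimes}B(G_n)$ (which indeed follows from viewing $F\in B(G)$ as a $B(G_1)$-valued element of $B(G_2\times\cdots\times G_n)$ and using the AP of $B(G_1)$ to approximate by finite-rank slices), and then reduce to the one-variable density $H(\overline{G_j})\hookrightarrow B(G_j)$ via Walsh--Mergelyan. Your route is conceptually clean---it isolates the analytic content in dimension one---but it imports substantially heavier external results (Mergelyan-type rational approximation for multiply connected $K$, the AP for $B(G_j)$, and the slice-product characterisation of $\check{\otimes}$), whereas the paper's Cauchy-decomposition trick achieves the multiply connected reduction by bare hands and stays entirely self-contained.
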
 
\begin{proof}
Let $G=G_1\times\ldots\times G_n$. Fix $f\in B(G)$. Before reaching the conclusion, let us consider a special case. 
Assume that every component $G_j$ is simply connected. For every $j$ let $\varphi_j:G_j\rightarrow\mathbb{D}$ denote a Riemann mapping. Since $\partial G_j$ is analytic, the mapping $\varphi_j$ is holomorphic and univalent on some neighborhood of $\overline{G}_j$ (see \cite[Proposition 3.1]{Pommerenke}). Then , for $r \in (0,1)$, the function $f_r$ defined by 
$$f_r(z)=f_r(z_1,\ldots,z_n):=f\left(\varphi_1^{-1}(r\varphi_1(z_1)),\ldots,\varphi^{-1}_n(r\varphi_n(z_n))\right)$$ 
is well-defined and holomorphic in a neighborhood of $\overline{G}$. This construction yields an approximation of $f$ that meets the required conditions.
\newline Next, let us consider the general case. By the Cauchy Integral Formula:
$$f(z)=\frac{1}{(2\pi i)^n}\int_{\partial_0 G}\frac{f(\zeta)}{\zeta-z}d\zeta =\sum_{m\in\mathbb{N}^n,\,1\leq m_j\leq k_j}\frac{1}{(2\pi i)^n}\int_{\partial G_{n,{m_n}}}\ldots\int_{\partial G_{1,{m_1}}}\frac{f(\zeta)}{\zeta-z}d\zeta_1\ldots d\zeta_n,$$
for $z\in G$, where $\partial G_j=\cup_{l=1}^{k_j} G_{j,{l}},\,j=1,\ldots,n$. For $m=(m_1,\ldots,m_n)\in\mathbb{N}^n$ such that $1\leq m_j\leq k_j,\,j=1,\ldots,n$, we define 
$$f_m(z):=\frac{1}{(2\pi i)^n}\int_{\partial G_{n,{m_n}}}\ldots\int_{\partial G_{1,{m_1}}}\frac{f(\zeta)}{\zeta-z}d\zeta_1\ldots d\zeta_n,\ \ z\notin \partial_0 G.$$
Observe that $f_m$ is holomorphic in the domain $G(m)$, where $G(m)$ denotes the unique domain bounded by $\partial_0 G_{1,{m_1}}\times\ldots\times \partial_0 G_{n,{m_n}}$ that contains $G$. Moreover, $f_m$ has a continuous extension to $\partial_0 G_{1,{m_1}}\times\ldots\times \partial_0 G_{n,{m_n}}$. This shows that $B(G)$ is a direct sum of $B(G(m))$'s. Now, for every $f_m$ we can apply the special case considered earlier.  
\end{proof}

For a cylindrical set $E=E_1\times\ldots\times E_n\subset\mathbb{C}^n$, we define a complementary cylinder set $E^b$ as follows
$$E^b:=E_1^c\times\ldots\times E_n^c.$$

\begin{thm}\textup{(\cite[Satz 1]{Tillmann})}\label{Satz Tillmann} Let $G\in\mathcal{G}_n$. Spaces $H(G)$ and $H(G^b)$ are strongly dual. 
\end{thm}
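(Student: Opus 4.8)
The plan is to realise the duality explicitly through the Köthe--Grothendieck contour pairing and then bootstrap from one complex variable to $n$. Given $f\in H(G)$ and a germ in $H(G^b)$ with a representative $g$ holomorphic on an open set $\omega\supset G^b=G_1^c\times\cdots\times G_n^c$, I would put
\[
\langle f,g\rangle:=\frac{1}{(2\pi i)^n}\int_{\Gamma_n}\cdots\int_{\Gamma_1} f(\zeta)\,g(\zeta)\,d\zeta_1\cdots d\zeta_n,
\]
where each $\Gamma_j$ is a finite system of rectifiable cycles contained in $G_j$ which, inside the $j$-th slice of $\omega$, is homologous to a cycle enclosing $G_j^c$. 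Such cycles exist because $G_j^c\neq\varnothing$ (here one uses $\overline{G_j}\neq\hat{\mathbb C}$), and finitely many of them suffice because $G_j$ has finite connectivity. Applying the one-variable Cauchy theorem coordinate by coordinate, one checks that the value does not depend on the representative $g$ or on the admissible contours, that the form is separately continuous, and that (by Fubini) the iterated integral may be taken in any order. Thus the pairing induces continuous linear maps $S\colon H(G)\to H(G^b)'_\beta$ and $T\colon H(G^b)\to H(G)'_\beta$, and the theorem amounts to showing that $S$ (equivalently $T$) is a topological isomorphism onto the strong dual.

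Next I would dispose of the case $n=1$, which is Köthe's theorem. Since $G_1$ has finite connectivity, $\hat{\mathbb C}\setminus G_1=K_0\sqcup\cdots\sqcup K_m$ with the $K_i$ pairwise disjoint, nonempty, compact and connected; by a Runge/Mittag--Leffler decomposition $H(G_1)$ splits as a finite topological direct sum of spaces of functions holomorphic off a single $K_i$, and dually $H(G_1^c)=\bigoplus_i H(K_i)$. Each summand reduces, by a conformal change of variable, to the model duality between the Fréchet space $\{\sum_{k\ge0}a_kz^{-k-1}\}$ of functions holomorphic outside $\overline{\mathbb D}$ and the germ space $\{\sum_{k\ge0}b_kz^k\}$ of functions holomorphic near $\overline{\mathbb D}$, with pairing $\sum_k a_kb_k$. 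This is the classical dual pair of power-series Köthe spaces of infinite and finite type, and the pairing is known to be a topological isomorphism onto the strong dual; reassembling the finitely many summands yields the assertion for $n=1$.

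Finally I would lift to $n>1$ by tensor products. Each $H(G_j)$ is a nuclear Fréchet space and each $H(G_j^c)$, being a space of germs on the compact set $G_j^c$, is a nuclear $(DF)$-space; expanding holomorphic functions in one variable at a time gives the canonical topological isomorphisms $H(G)\cong H(G_1)\,\widehat\otimes\,\cdots\,\widehat\otimes\,H(G_n)$ and $H(G^b)\cong H(G_1^c)\,\widehat\otimes\,\cdots\,\widehat\otimes\,H(G_n^c)$, nuclearity making the projective and injective tensor topologies agree. Since for such nuclear spaces the strong dual commutes with the completed tensor product, $(X\,\widehat\otimes\,Y)'_\beta\cong X'_\beta\,\widehat\otimes\,Y'_\beta$, the one-variable isomorphism of the previous step tensors up to give $S\colon H(G)\xrightarrow{\ \sim\ }H(G^b)'_\beta$, and the induced bilinear form is exactly the iterated integral above. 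The reverse statement $T\colon H(G^b)\xrightarrow{\ \sim\ }H(G)'_\beta$ then follows from reflexivity. Alternatively one can induct on $n$ directly, viewing $H(G_1\times\cdots\times G_n)$ as the space of $H(G_2\times\cdots\times G_n)$-valued holomorphic functions on $G_1$ and applying the vector-valued form of the one-variable duality together with the inductive hypothesis.

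I expect two points to carry the real weight. In the one-variable step, promoting the model pairing from an algebraic to a genuinely \emph{topological} isomorphism onto the \emph{strong} dual requires identifying the bounded sets on both sides, i.e. the Köthe-sequence-space description of the topologies. In the passage to several variables, the commutation of the strong dual with the completed tensor product is precisely where nuclearity is indispensable and would otherwise fail. The remaining verifications --- invariance of the iterated contour integral under homologous deformations and reordering, and continuity of the pairing --- are routine, and use finite connectivity (finitely many contours) and $\overline{G_j}\neq\hat{\mathbb C}$ (existence of the contours).
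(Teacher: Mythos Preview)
The paper does not supply its own proof of this theorem: it is quoted verbatim from Tillmann \cite[Satz~1]{Tillmann} and used as a black box. The only hint the paper gives about the argument is the remark, in the proof of Proposition~\ref{ważne}, that ``the proof is similar to that of Theorem~\ref{Satz Tillmann}'', which suggests Tillmann's method is the concrete one displayed there --- attach to a functional the function $\lambda\mapsto u(1/(\cdot-\lambda))$, verify separate holomorphy by hand via difference quotients, invoke Hartogs, and recover the action of the functional through the Cauchy integral formula approximated by Riemann sums.

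Your proposal is therefore not comparable to a proof in the paper, but it is a legitimate and essentially correct route. It is, however, a genuinely different one: rather than building the dual map explicitly and checking surjectivity and bicontinuity by hand, you factor the problem as (i) the one-variable K\"othe duality and (ii) a tensor-product lift using nuclearity and the identity $(X\widehat\otimes Y)'_\beta\cong X'_\beta\widehat\otimes Y'_\beta$. This buys you a clean induction and avoids the iterated-integral bookkeeping, at the cost of importing Grothendieck's machinery (which postdates Tillmann's 1953 paper, so could not have been his argument). The two points you flag as carrying the weight are indeed the right ones; the only place I would ask you to be more explicit is the identification $H(G^b)\cong H(G_1^c)\,\widehat\otimes\cdots\widehat\otimes\,H(G_n^c)$ for germ spaces, since $H(G^b)$ is an inductive limit over neighbourhoods of a product set and one must check that this agrees with the tensor product of the factorwise inductive limits (it does, by cofinality of product neighbourhoods and exactness of the tensor product in the nuclear setting, but it deserves a sentence).
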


\begin{proposition}\label{ważne}
Let $G\in\mathcal{G}_n$. For $u\in B(G)'$ the function $\widetilde{u}$ defined by
$$\widetilde{u}(\lambda):=u\left( \frac{1}{z-\lambda}\right)=u\left( \frac{1}{z_1-\lambda_1}\cdot\ldots\cdot \frac{1}{z_n-\lambda_n}\right),\ \ \lambda=(\lambda_1,\ldots,\lambda_n)\in \overline{G}^{b},$$
is holomorphic on $\overline{G}^{b}$. Moreover, the action of $u$ on $H(\overline{G})$ is given by the formula
$$u(f)=\frac{(-1)^n}{(2\pi i)^n}\int_{\partial_0 G}\widetilde{u}(\lambda) f(\lambda)d\lambda,\ \ f\in H(\overline{G}).$$
\end{proposition}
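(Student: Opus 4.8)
The plan is to recognise $\widetilde{u}$ as the composition of $u$ with a holomorphic Banach-space-valued map, and to derive the integral formula from the several-variable Cauchy integral formula by passing $u$ under the integral sign.

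I would begin with the holomorphy of $\widetilde{u}$. For $\lambda\in\overline{G}^{b}$ every coordinate satisfies $\lambda_j\notin\overline{G_j}$, hence $g_\lambda(z):=\prod_{j=1}^{n}(z_j-\lambda_j)^{-1}$ (the $j$-th factor being understood as $0$ when $\lambda_j=\infty$) is holomorphic on a neighbourhood of $\overline{G}$; thus $g_\lambda\in H(\overline{G})$, in particular $g_\lambda\in B(G)$, so $\widetilde{u}(\lambda)=u(g_\lambda)$ is well defined and vanishes whenever some $\lambda_j$ is infinite. Next I would check that $\lambda\mapsto g_\lambda$ is holomorphic from $\overline{G}^{b}$ into $B(G)$: near a point $\lambda^{0}$ one expands each factor in a geometric series, in the variable $\lambda_j-\lambda^{0}_j$ when $\lambda^{0}_j$ is finite and in the chart $1/\lambda_j$ when $\lambda^{0}_j=\infty$, and, since $\overline{G_j}$ is a compact subset of $\hat{\mathbb{C}}$ missing $\lambda^{0}_j$, the resulting multiple power series converges in the norm of $B(G)$ on a polydisc about $\lambda^{0}$. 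Applying the continuous linear functional $u$ term by term yields a norm-convergent local power-series expansion of $\widetilde{u}$, which proves the first assertion.

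For the integral formula, fix $f\in H(\overline{G})$ and choose a cylindrical open set $\omega=\omega_1\times\cdots\times\omega_n\supset\overline{G}$ on which a representative of $f$ is holomorphic. Enlarging $\overline{G_j}$ slightly I would pick a cylindrical domain $G'=G'_1\times\cdots\times G'_n$ with piecewise $\mathcal{C}^{1}$ boundary such that $\overline{G_j}\subset G'_j$ and $\overline{G'_j}\subset\omega_j$ for every $j$. Then $\partial_0 G'\subset\omega$, while $\partial G'_j\cap\overline{G_j}=\varnothing$ gives $\partial_0 G'\subset\overline{G}^{b}$, so both $f$ and $\widetilde{u}$ are holomorphic on a neighbourhood of $\partial_0 G'$. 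The Cauchy integral formula on $G'$ (used as in the proof of the preceding proposition) gives, for $z\in G$,
\[
f(z)=\frac{1}{(2\pi i)^n}\int_{\partial_0 G'}\frac{f(\zeta)}{\zeta-z}\,d\zeta .
\]
The map sending $\zeta\in\partial_0 G'$ to the function $z\mapsto f(\zeta)\prod_{j=1}^{n}(\zeta_j-z_j)^{-1}=(-1)^n f(\zeta)\,g_\zeta(z)$ is a continuous map from the compact set $\partial_0 G'$ into $B(G)$, its values lying in $H(\overline{G})\subset B(G)$; continuity is a routine supremum estimate. Hence its Bochner integral exists in $B(G)$, and evaluating at points of $G$ identifies it with $(2\pi i)^n f$. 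Since $u$ is linear and continuous it commutes with this integral, so, using $\prod_j(\zeta_j-z_j)^{-1}=(-1)^n\prod_j(z_j-\zeta_j)^{-1}$ and the definition of $\widetilde{u}$,
\[
u(f)=\frac{1}{(2\pi i)^n}\int_{\partial_0 G'}(-1)^n f(\zeta)\,\widetilde{u}(\zeta)\,d\zeta
=\frac{(-1)^n}{(2\pi i)^n}\int_{\partial_0 G'}\widetilde{u}(\lambda)\,f(\lambda)\,d\lambda .
\]
Two admissible choices of $G'$ produce distinguished boundaries that are homologous inside $\overline{G}^{b}\cap\omega$, on which $\widetilde{u}\cdot f$ is holomorphic, so the integral is independent of the choice; this is what legitimises writing $\partial_0 G$ in the statement.

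The step I expect to be the main obstacle is passing $u$ under the integral, together with the bookkeeping it forces: one must verify that the integrand is a genuinely $B(G)$-valued, continuous function on $\partial_0 G'$ --- which is precisely why the enlargement $\overline{G}\subset G'$ is needed, since $(\zeta_j-z_j)^{-1}$ is singular on $\partial G_j\subset\overline{G_j}$ --- and then invoke the elementary fact that a continuous linear functional commutes with a vector-valued integral. By comparison, the holomorphy of $\widetilde{u}$ and the contour-independence are routine once this framework is set up.
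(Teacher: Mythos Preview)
Your proof is correct and follows the same overall strategy as the paper: show that the map $\lambda\mapsto g_\lambda$ into $B(G)$ is well-behaved, then exploit continuity of $u$. The implementations differ slightly. For holomorphy, the paper computes difference quotients in each $\lambda_j$ separately, checks that they converge in $B(G)$, and then invokes the Hartogs theorem; you instead package the same computation as a norm-convergent local power series for the $B(G)$-valued map $\lambda\mapsto g_\lambda$, which gives holomorphy directly without Hartogs. For the integral formula, the paper approximates the Cauchy integral by Riemann sums in $B(G)$ and applies $u$ termwise, whereas you phrase the same passage to the limit as a Bochner integral commuting with a bounded functional. Your version is a bit more streamlined and makes the contour-independence (the meaning of writing $\partial_0 G$ in the statement while the actual integration is over an enlarged $\partial_0 G'$ or $\partial_0 U$) explicit; the paper's version is more elementary and self-contained. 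Neither gains anything substantive over the other.
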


\begin{proof}
The proof is similar to that of Theorem \ref{Satz Tillmann}. For the sake of completeness, we include it here. By the Hartogs Theorem, for the first part of the statement, it is enough to show that $\widetilde{u}$ is separately holomorphic. For $\lambda \in \overline{G}^{b}$ and sufficiently small $h$, we have:
\begin{multline*}
\frac{1}{h}\left[ \widetilde{u}(\lambda_1,\ldots,\lambda_j +h,\ldots,\lambda_n)-\widetilde{u}(\lambda_1,\ldots,\lambda_j ,\ldots,\lambda_n)\right] = \\ 
u\left(\frac{1}{h}\left(\frac{1}{z_j-\lambda_j-h}-\frac{1}{z_j-\lambda_j}\right)\prod_{k\not=j} \frac{1}{z_k-\lambda_k}\right) = u\left(\frac{1}{(z_j-\lambda_j-h)(z_j-\lambda_j)}\prod_{k\not=j} \frac{1}{z_k-\lambda_k}\right).
\end{multline*}
Since $\frac{1}{(z_j-\lambda_j-h)(z_j-\lambda_j)}$ tends to $\frac{1}{(z_j-\lambda_j)^2}$ in $B(G)$ as $h$ approaches $0$, we obtain that 
$$
\frac{\partial \widetilde{u}(\lambda)}{\partial \lambda_j}=u\left(\frac{1}{(z-\lambda)^{\mathbf{1}_n+e_j}}\right).
$$
By a similar argument, we also conclude that 
$$\widetilde{u}(\lambda)\to 0\ \textup{ as }\ \lvert \lambda\rvert\to\infty.$$

For the second part of the statement, let us fix $f\in H(V)$, where $V\in\mathcal{G}_n$ is such that $\overline{G}\subset V$. Let $U\in\mathcal{G}^\omega_n$ be such that $\overline{G}\subset U\subset\overline{U}\subset V$. By the Cauchy Integral Formula, we may write:
\begin{equation}\label{wniosek 1}
f(z)=\frac{1}{(2\pi i)^n}\int_{\partial_0 U}\frac{f(\lambda)}{\lambda-z}d\lambda,\ \ \ z\in U.
\end{equation}
Using Riemann sums to approximate (\ref{wniosek 1}), we get
$$
u(f)=\frac{(-1)^n}{(2\pi i)^n}\int_{\partial_0 U} f(\lambda)\widetilde{u}(\lambda)d\lambda.
$$
\end{proof}
\begin{remark}
The converse in Proposition \ref{ważne} is false. To see this, let $G=\mathbb{P}_n$ and define $\widetilde{u}(\lambda)=\frac{1}{(\lambda-1)^{\mathbf{1}_n+\mathbf{1}_n}}$ for $\lambda\in\mathbb{P}_n$. Suppose that $\widetilde{u}$ corresponds to some $u\in B(G)'$. Then, we would have the inequality 
$$|\widetilde{u}(\lambda)|\leq \lVert u\rVert \Big{\lVert}\frac{1}{(\lambda-1)^{\mathbf{1}_n}}\Big{ \rVert}_{\mathbb{P}_n}.$$ 
However, it is clear that $\widetilde{u}$ grows faster near $\partial_0\mathbb{P}_n$. This contradiction shows that $\widetilde{u}$ cannot correspond to any $u\in B(G)'$, thereby disproving the converse.
\end{remark}

\begin{corollary}
Let $G=G_1\times\ldots\times G_n\in\mathcal{G}_n^\omega$ and $D=D_1\times\ldots\times D_n\in\mathcal{G}_n$ be domains such that $\overline{G}\subset D$ and every component of $G_j^c$ intersects $D_j^c$ for $j=1,\ldots,n$. Then $H(D)$ is dense in $B(G)$.
\end{corollary}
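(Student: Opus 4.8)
The plan is to argue by duality. Since $\overline{G}\subset D$, restriction to $G$ embeds $H(D)$ into $B(G)$, and by the Hahn--Banach theorem it suffices to show that every $u\in B(G)'$ annihilating this copy of $H(D)$ vanishes identically. Fix such a $u$ and let $\widetilde{u}(\lambda)=u\!\left(\frac{1}{z-\lambda}\right)$ be the function attached to it by Proposition~\ref{ważne}; it is holomorphic on $\overline{G}^{b}=\prod_{j=1}^{n}(\hat{\mathbb{C}}\setminus\overline{G_j})$, and $D^{b}=\prod_{j=1}^{n}(\hat{\mathbb{C}}\setminus D_j)\subset\overline{G}^{b}$ because $\overline{G}\subset D$. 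If we can prove $\widetilde{u}\equiv 0$ on $\overline{G}^{b}$, then the integral representation of Proposition~\ref{ważne} (whose contour lies in $\overline{G}^{b}$, where $\widetilde{u}$ now vanishes) gives $u(f)=0$ for every $f\in H(\overline{G})$; since $H(\overline{G})$ is dense in $B(G)$ (recall $G\in\mathcal{G}_n^{\omega}$) and $u$ is continuous, this forces $u=0$, as required.

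Thus everything reduces to showing $\widetilde{u}\equiv 0$. The first step is to observe that \emph{all} coefficients of the local expansion of $\widetilde{u}$, in the natural holomorphic coordinates on $\hat{\mathbb{C}}^{n}$, vanish at every point of $D^{b}$. Indeed, for a finite $p\in D^{b}$ and a multi-index $\alpha$, iterating the computation in the proof of Proposition~\ref{ważne} gives $\partial_{\lambda}^{\alpha}\widetilde{u}(p)=\alpha!\,u\!\left(\frac{1}{(z-p)^{\mathbf{1}_n+\alpha}}\right)$, and $\frac{1}{(z-p)^{\mathbf{1}_n+\alpha}}=\prod_{j}(z_j-p_j)^{-(1+\alpha_j)}$ belongs to $H(D)$ since $p_j\notin D_j$, hence is annihilated by $u$. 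A coordinate of $p$ equal to $\infty$ is handled the same way after passing to the local coordinate $1/\lambda_j$ at infinity (the relevant coefficients of $\widetilde{u}$ are then values of $u$ on functions of the form $z_j^{m}\prod_{k\neq j}(z_k-p_k)^{-1}\in H(D)$), or one may simply invoke $\widetilde{u}(\lambda)\to 0$ as $|\lambda|\to\infty$ from Proposition~\ref{ważne}. In particular $\widetilde{u}$ is identically zero on a neighbourhood of each point of $D^{b}$.

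The second step is to place a point of $D^{b}$ inside each connected component of $\overline{G}^{b}$. These components are exactly the products $V_1\times\cdots\times V_n$ with $V_j$ a connected component of $\hat{\mathbb{C}}\setminus\overline{G_j}$. Since $G\in\mathcal{G}_n^{\omega}$, the closure $\overline{V_j}$ is a component of $G_j^{c}$, and every component of $G_j^{c}$ arises in this way; so the hypothesis that every component of $G_j^{c}$ meets $D_j^{c}$ says precisely that $\overline{V_j}\cap D_j^{c}\neq\emptyset$ for each $j$ and each such $V_j$. As $\overline{G}\subset D$ gives $\partial V_j\subset\partial G_j\subset D_j$, we get $\partial V_j\cap D_j^{c}=\emptyset$, hence $\overline{V_j}\cap D_j^{c}=V_j\cap D_j^{c}$, and we may choose $p_j\in V_j\cap D_j^{c}$. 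Then $p=(p_1,\dots,p_n)$ lies both in $D^{b}$ and in the component $V_1\times\cdots\times V_n$; by the first step $\widetilde{u}$ vanishes near $p$, and since the component is connected, $\widetilde{u}$ vanishes on all of it. Running over all components yields $\widetilde{u}\equiv 0$ on $\overline{G}^{b}$, which completes the argument.

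The step requiring the most care — and the one where the exact phrasing of the hypothesis matters — is the reduction of $\overline{V_j}\cap D_j^{c}\neq\emptyset$ to $V_j\cap D_j^{c}\neq\emptyset$, which rests on $\partial V_j\subset\partial G_j\subset D_j$ (this is why the hypothesis is stated through the components of $G_j^{c}$ rather than of $\hat{\mathbb{C}}\setminus\overline{G_j}$); everything else — the Hahn--Banach reduction, differentiation under $u$, the description of the components — is routine. It is also worth keeping in mind that only a \emph{single} point of $D^{b}$ per component is needed, which is precisely why the first step establishes vanishing of all local coefficients of $\widetilde{u}$ along $D^{b}$ and not merely of $\widetilde{u}$ itself; this keeps the argument valid even when $D^{b}\cap\overline{G}^{b}$ happens to be a discrete set.
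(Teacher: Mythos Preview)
Your proof is correct and follows the same route as the paper: a Hahn--Banach reduction (the paper phrases it as uniqueness of an extension rather than vanishing of an annihilator, which is equivalent), carried out by showing via Proposition~\ref{ważne} that the transform $\widetilde{u}$ vanishes identically on $\overline{G}^{b}$ using the identity principle and the density of $H(\overline{G})$ in $B(G)$. Your treatment of the identity-principle step is in fact more careful than the paper's---you verify that \emph{all} Taylor coefficients of $\widetilde{u}$ vanish at each point of $D^{b}$, not merely $\widetilde{u}$ itself, which covers the possibility that $D^{b}$ meets some component of $\overline{G}^{b}$ in a set without interior.
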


\begin{proof}
Let $u \in H(D)'$ be fixed. Suppose that $v$ and $w$ are extensions of $u$ to $B(G)$. By Proposition \ref{ważne}, we know that $v$ and $w$ correspond to holomorphic functions on $\overline{G}^b$, denoted $\widetilde{v}$ and $\widetilde{w}$, respectively. These are given by the formulas 
$$\widetilde{v}(\lambda)=v\left(\frac{1}{\cdot-\lambda}\right)\ \  \textup{ and } \ \ \ \widetilde{w}(\lambda)=w\left(\frac{1}{\cdot-\lambda}\right),\ \ \ \lambda \in \overline{G}^b. $$ 
For $\lambda \in D^b$, we have $\widetilde{w}(\lambda)=\widetilde{u}(\lambda)=\widetilde{v}(\lambda)$. 
Since for every $j$ every component of $G_j^c$ intersects  $D_j^c$ nontrivially, the identity principle implies that $\widetilde{w} = \widetilde{v}$. Consequently, $w = v$ on $H(\overline{G})$. Finally, since $H(\overline{G})$ is dense in $B(G)$, this completes the proof.
\end{proof}

We now present the main theorem of this section.

\begin{thm}\label{pierwsze twierdzenie}
Let $G_1, G_2 \in \mathcal{G}_n$. Define the map
\[
\Phi: \mathcal{L}\bigl(H(G_1), H(G_2)\bigr) \longrightarrow H\bigl(G_1^b \times G_2\bigr)
\]
by
\[
\Phi(A)(\zeta, z) := A\Bigl(\frac{1}{w_1-\zeta_1} \cdot \frac{1}{w_2-\zeta_2} \cdots \frac{1}{w_n-\zeta_n}\Bigr)(z),
\quad A \in \mathcal{L}\bigl(H(G_1), H(G_2)\bigr), \; (\zeta,z) \in G_1^b \times G_2.
\]
Then $\Phi$ is a topological isomorphism of Frech\'{e}t spaces.
\end{thm}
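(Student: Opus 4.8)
The plan is to show $\Phi$ is a well-defined continuous linear bijection with continuous inverse, exploiting the duality $H(G_1)' \cong H(G_1^b)$ from Theorem~\ref{Satz Tillmann}. First, I would verify that $\Phi$ is well-defined: for fixed $z \in G_2$, the map $\zeta \mapsto A(1/(w-\zeta))(z)$ is holomorphic on $G_1^b$ because $\zeta \mapsto 1/(w-\zeta)$ is a holomorphic $H(G_1)$-valued map on $G_1^b$ (as in Proposition~\ref{ważne}, using the difference-quotient argument and Hartogs) and $A$ is continuous and linear; joint holomorphy on $G_1^b \times G_2$ follows from separate holomorphy (the $z$-variable holomorphy is immediate since $A$ maps into $H(G_2)$) together with local boundedness, via Hartogs' theorem again. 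Linearity of $\Phi$ is clear.

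Next I would construct the inverse. Given $K \in H(G_1^b \times G_2)$, for each fixed $z \in G_2$ the function $\zeta \mapsto K(\zeta,z)$ lies in $H(G_1^b)$, hence by Theorem~\ref{Satz Tillmann} defines a functional $u_z \in H(G_1)'$, acting (after choosing appropriate $U \in \mathcal{G}_n^\omega$ with $\overline{G_1} \subset U$) by the Cauchy-type pairing $u_z(f) = \frac{(-1)^n}{(2\pi i)^n}\int_{\partial_0 U} K(\lambda,z) f(\lambda)\, d\lambda$ as in Proposition~\ref{ważne}. Define $(\Psi K)(f)(z) := u_z(f)$. One checks $z \mapsto (\Psi K)(f)(z)$ is holomorphic on $G_2$ (differentiate under the integral sign), so $\Psi K \in \mathcal{L}(H(G_1), H(G_2))$; continuity follows from the integral formula and standard Cauchy estimates on compact subsets. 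A direct computation with the Cauchy kernel shows $\Phi \circ \Psi = \mathrm{id}$ and $\Psi \circ \Phi = \mathrm{id}$: the first because plugging $f = 1/(w-\zeta)$ into the Cauchy integral reproduces $K(\zeta,z)$; the second because $\Phi(A)(\zeta,z) = A(1/(w-\zeta))(z)$ is exactly the kernel whose associated functional, by the reproducing property, returns $A$.

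Finally, for the topological statement, I would equip $\mathcal{L}(H(G_1), H(G_2))$ with the topology of uniform convergence on bounded subsets of $H(G_1)$ (which makes it a Fréchet space since $H(G_1)$ is a Fréchet–Schwartz space, so bounded sets are relatively compact and this coincides with the compact-open/bounded-open topology; $H(G_2)$ Fréchet) and show $\Phi$ and $\Psi$ are continuous for the respective Fréchet topologies. Continuity of $\Psi$ is the integral estimate above; continuity of $\Phi$ uses that a bounded sequence of operators, applied to the (compact) family $\{1/(w-\zeta) : \zeta \in L\}$ for $L \Subset G_1^b$, yields functions bounded uniformly on compact subsets of $G_1^b \times G_2$. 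Since both spaces are Fréchet and $\Phi$ is a continuous linear bijection, the open mapping theorem gives that $\Phi^{-1} = \Psi$ is automatically continuous, so it suffices to check continuity of $\Phi$ in one direction.

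The main obstacle I anticipate is the careful handling of joint holomorphy and the locally-convex/Fréchet structure: one must confirm that $\zeta \mapsto 1/(w-\zeta)$ is genuinely holomorphic as a Banach/Fréchet-space-valued map uniformly enough to push through $A$, and that the topology on $\mathcal{L}(H(G_1),H(G_2))$ is correctly identified so that $\Phi$ lands in $H(G_1^b \times G_2)$ continuously — the interplay between the inductive-limit nature of germ spaces hidden inside $H(G_1^b)$ (via Theorem~\ref{Satz Tillmann}) and the projective-limit nature of $H(G_2)$ is where the estimates are most delicate. Everything else reduces to the reproducing-kernel computation already modeled in Proposition~\ref{ważne}.
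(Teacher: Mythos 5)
Your overall strategy (kernel $\Phi(A)(\zeta,z)=A(1/(w-\zeta))(z)$, inverse via a Cauchy-type pairing as in Proposition \ref{ważne}, bijectivity by reproducing-kernel computations, Open Mapping Theorem for the topological part) is the same as the paper's, but there is a genuine gap at the very first step: well-definedness of $\Phi$ as a map into $H(G_1^b\times G_2)$. The target is a space of \emph{germs}: an element must be holomorphic on an open neighborhood of $G_1^b\times G_2$, and $G_1^b=\prod_k (G^{(1)}_k)^c$ is not open (it contains $\partial G^{(1)}_k$-points). Your argument only produces the function $(\zeta,z)\mapsto A(1/(w-\zeta))(z)$ for $\zeta\in G_1^b$, where $1/(w-\zeta)\in H(G_1)$; for $\zeta$ in a neighborhood of $G_1^b$ with some coordinate inside $G_1^{(k)}$, the function $1/(w-\zeta)$ has a pole in $G_1$ and $A$ cannot be applied, so holomorphy "on $G_1^b$" in the germ sense is exactly what is not yet established. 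The appeal to Proposition \ref{ważne} hides this point: there the functional lives on the Banach space $B(G)$, which is why $\widetilde u$ is holomorphic on the \emph{open} set $\overline{G}^{\,b}$. The paper's proof invests precisely here: using continuity of $A$ one extends it (by density of $H(G_1)$ in $B(G_{1,N(l)})$) to operators $A_l\colon B(G_{1,N(l)})\to B(G_{2,l})$ satisfying (\ref{M_k}), defines $a_l(\zeta,z)=A_l(1/(w-\zeta))(z)$ on the open sets $\overline{G}_{1,N(l)}^{\,c}\times G_{2,l}$, and glues these to a holomorphic function on an open neighborhood of $G_1^b\times G_2$. Without this extension step your map does not land in the stated target space, and the subsequent identity $\Psi\circ\Phi=\mathrm{id}$, which must be an identity of germs, cannot even be formulated.

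A second, related weakness is the topological part. Your claim that $\mathcal{L}(H(G_1),H(G_2))$ with the topology of uniform convergence on bounded sets is Fréchet "since $H(G_1)$ is Fréchet--Schwartz" is not correct: already $H(G_1)'_b$, which sits complementedly inside this space, is non-metrizable because $H(G_1)$ is not normable, so Fréchet-ness cannot be obtained this way. Moreover, you propose to prove continuity of $\Phi$ directly and get $\Phi^{-1}$ from the Open Mapping Theorem; but continuity \emph{into} the inductive-limit germ space $H(G_1^b\times G_2)$ is the delicate direction, since one must show that the kernels of operators from a whole neighborhood (or of a convergent net) live and converge in a single fixed $H(U)$ -- your sketch with "bounded sequences of operators" does not address this, and sequential arguments are insufficient here anyway. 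The paper goes the other way: it verifies continuity of $\Phi^{-1}$ directly from the estimate (\ref{inverse 2}) (where convergence in the germ space \emph{gives} a fixed $H(U)$) and then invokes the Open Mapping Theorem to obtain continuity of $\Phi$; you should either follow that route or supply the uniform-neighborhood argument your direction requires.
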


\begin{proof}
Assume $G_j=G_1^{(j)}\times\ldots G_n^{(j)}$ with $G_k^{(j)}\subset\mathbb{C}$ being a domain for $j=1,2$ and $k=1\ldots ,n$. For every $j$ and $k$, we choose an exhaustion $\big{(}G^{(j)}_{k,l}\big{)}_{l=0}^{\infty}$ of $G_{k}^{(j)}$ by open and relatively compact domains $G^{(j)}_{k,l}\subset G_{k}^{(j)}$ in such a way that $\overline{G^{(j)}_{k,l}}\subset G^{(j)}_{k,l+1}$ and the boundary of every $G^{(j)}_{k,l}$ consists of finitely many pairwise disjoint, smooth and rectifiable Jordan curves oriented positively with respect to $G_{k,l}^{(j)}$. Moreover, ensure that every component of $\hat{\mathbb{C}}\setminus G_{k,l}^{(j)}$ contains a point of $\hat{\mathbb{C}}\setminus G_k^{(j)}$. For convenience, we define $G_{j,l}:=G^{(j)}_{1,l}\times\ldots\times G^{(j)}_{n,l}$ for $j=1,2$ and $l\in\mathbb{N}$. 

We now fix $A\in\mathcal{L}(H(G_1),H(G_2))$. For every $l\in\mathbb{N}$, there exist $N(l)\in\mathbb{N}$ and $\varepsilon(l)>0$ such that
$$A\big{(}\{f\in H(G_1):\lVert f\rVert_{1,N(l)}\leq\varepsilon(l)\}\big{)}\subset\{g\in H(G_2):\lVert g\rVert_{2,l}\leq 1\},$$
where $\lVert\ \rVert_{k,l}$ denotes the norm $\lVert\ \rVert_{G_{k,l}}$. Since $H(G_1)$ is dense in $B(G_{1,N(l)})$ and $B(G_{2,l})$ is complete, $A$ can be uniquely extended to a continuous mapping $A_l$ on the entire $B(G_{1,N(l)})$ with values in $B(G_{2,l})$.  Consequently, for all $f\in B(G_{1,N(l)})$ the inequality
\begin{equation}\label{M_k}
\lVert A_l f\rVert_{2,l} \leq M_l\rVert f\rVert_{1,N(l)}
\end{equation}
holds, where $M_l=\varepsilon (l)^{-1}$. 
\newline To proceed, we define a function $a_l$ of $2n$ variables by
$$a_l(\zeta,z):=A_l\left( \frac{1}{w_1-\zeta_1}\cdot\ldots\cdot\frac{1}{w_n-\zeta_n}\right)(z),\ \ (\zeta ,z)\in \overline{G}_{1,N(l)}^{\,c}\times \overline{G}_{2,l}.$$ 
For every point $(\zeta,z)$ in $\overline{G}_{1,N(l)}^{\,c}\times \overline{G}_{2,l}$ with at least one coordinate at infinity, we set $a_l(\zeta,z)=0$.  
Fixing $\zeta\in \overline{G}_{1,N(l)}^{\,c}$, the function $a_l(\zeta,\cdot)$ is holomorphic on $G_{2,l}$ and continuous on $\overline{G}_{2,l}$.  To verify the differentiability of $a_l$ with respect to $\zeta$, we now fix $z\in G_{2,l}$. For $\zeta\in\overline{G}^c_{1,N(l)}$ and sufficiently small $h$, we have:
\begin{multline*}\frac{1}{h}\left[ a_l((\zeta_1,\ldots,\zeta_j +h,\ldots,\zeta_n),z)-a_k((\zeta_1,\ldots,\zeta_j ,\ldots,\zeta_n),z)\right] = \\ 
A_l\left(\frac{1}{h}\left(\frac{1}{w_j-\zeta_j-h}-\frac{1}{w_j-\zeta_j}\right)\prod_{k\not=j} \frac{1}{w_k-\zeta_k}\right)(z)=A_l\left(\frac{1}{(w_j-\zeta_j-h)(w_j-\zeta_j)}\prod_{k\not=j} \frac{1}{w_k-\zeta_k}\right)(z).
\end{multline*}
As $h\rightarrow 0$, the quotient $\frac{1}{(w_j-\zeta_j-h)(w_j-\zeta_j)}$ converges to $\frac{1}{(w_j-\zeta_j)^2}$ in $B(G_{1,N(l)})$. Therefore, 
$$\frac{\partial a_l(\zeta,z)}{\partial \zeta_j}=A_l\left(\frac{1}{(w-\zeta)^{\mathbf{1}_n+e_j}}\right)(z),$$ 
where $\mathbf{1}_n+e_j$ denotes the coordinatewise addition of multi-indices.
From (\ref{M_k}), it follows that as $\zeta\rightarrow\infty$, then $a_l$ tends to $0$. Hence, $a_l$ is separately holomorphic  on $\overline{G}_{1,N(l)}^{\,c}\times G_{2,l}$  and vanishes at every $(\zeta,z)$ where at least one coordinate is at infinity. Therefore, by the Hartogs Theorem $a_l$ is holomorphic on $\overline{G}_{1,N(l)}^{\,c}\times G_{2,l}$. 
\newline Furthermore, for $l_1\leq l_2$, the functions $a_{l_1}$ and $a_{l_2}$  agree on $(\overline{G}_{1,N(l_1)}^{\,c}\cap \overline{G}_{1,N(l_2)}^{\,c})\times (G_{2,l_1}\cap G_{2,l_2})=\overline{G}_{1,N(l_2)}^{\,c}\times G_{2,l_1}$. This allows us to define a holomorphic function $a\in H(G_1^c\times G_2)$ such that $a=a_l$ on $\overline{G}_{1,N(l)}^{\,c}\times G_{2,l}$.

To describe the action of $A$, fix $f\in H(G_1)$. Recall that
$$f(\zeta)=\frac{1}{(2\pi i)^n}\int_{\partial_0 G_{1,N(l)+1}}\frac{f(w)}{w-\zeta}dw,\ \ \zeta\in\overline{G}_{1,N(l)}. $$
Approximating the last integral using Riemann sums, we obtain
$$Af(z)=\frac{(-1)^n}{(2\pi i)^n}\int_{\partial_0 G_{1,N(l)+1}} f(w)a(w,z)dw,\ \ z\in G_{2,l}.$$
Taking the limit as $l\rightarrow\infty$, we extend $Af$ to every $z\in G_2$.

Conversely, if $a\in H(G_1^c\times G_2)$, then $a$ is holomorphic on a domain $D$ such that for every $l\in\mathbb{N}$, there exists $m(l)$ with $\overline{G}_{1,m(l)}^c\times G_{2,l}\subset D$. Fix $l\in\mathbb{N}$. For $f\in H(G_1)$ and $z\in G_{2,l}$, define
\begin{equation}\label{inverse}
Af(z):=\frac{1}{(2\pi i)^n} \int_{\partial_0 G_{1,m(l)+1}} f(w)a(w,z)dw.
\end{equation}
Observe that (\ref{inverse}) defines a holomorphic function on $G_2$.  Moreover, the operator $A$ is bounded. Indeed, if $\lVert f\rVert_{G_{1,m(l)+1}}\leq 1$, then from (\ref{inverse}) we have
\begin{equation}\label{inverse 2}
\rVert Af\lVert_{G_{2,l}}\leq \frac{1}{(2\pi)^n}\lVert a\rVert_{\overline{G}_{1,m(l)+1}^c\times G_{2,l}} \prod_{k=1}^{\infty} l(\partial_0 G^{(1)}_{k,m(l)+1}),
\end{equation}
where $l(\partial_0 G^{(1)}_{k,m(l)+1})$ denotes the length of $\partial_0 G^{(1)}_{k,m(l)+1}$. 

So far we have shown that $\Phi$ is a vector space isomorphism.  
We now prove that $\Phi^{-1}$ is in fact a topological isomorphism.  
By the Open Mapping Theorem (see \cite[p.~289]{Vogt}), it suffices to establish continuity of $\Phi^{-1}$.  

Suppose that $a_n \to 0$ in $H(G_1^b \times G_2)$. Then there exists an open set $U \supset G_1^b \times G_2$ such that $a_n \in H(U)$ for all $n$.  
By (\ref{inverse 2}), we then have
\[
\sup\{\,|\Phi^{-1}(a_n)(f)(z)| : f \in B,\, z \in K \,\} \;\longrightarrow\; 0\ \textup{ as }\ n\rightarrow \infty
\]
for every bounded set $B \subset H(G_1)$ and every compact set $K \subset G_2$,  
which completes the proof.

\end{proof}

\begin{remark}\label{uwaga}
\textup{Observe that the proof of Theorem \ref{pierwsze twierdzenie}, with almost no modification, remains valid for any domain $G_2\subset\mathbb{C}^k$ with $k\in\mathbb{N}$.} 
\end{remark}

\begin{corollary}\label{wniosek}
Let $G_1,\,G_2\subset\mathbb{C}^n$ be domains and let $r\in\mathbb{R}_{>0}^n$ be such that $G_1\subset\mathbb{P}_n(r)$. Then the map 
$$\Phi:\mathcal{L}(H(G_1),H(G_2))\longrightarrow H(\mathbb{P}_n(r)^c \times G_2)$$
defined by
$$\Phi (A)(\zeta,z):=A\left(\frac{1}{w_1-\zeta_1}\cdot\ldots\cdot\frac{1}{w_n-\zeta_n}\right)(z),$$
for $A\in\mathcal{L}(H(G_1),H(G_2))$ and $(\zeta,z)\in \mathbb{P}_n(r)^c \times G_2$, is a monomorphism between vector spaces.
\end{corollary}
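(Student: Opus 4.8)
The plan is to realize $\Phi$ as the composition of two maps, one of which is handled by Theorem~\ref{pierwsze twierdzenie}. The components of $\mathbb{P}_n(r)$ are discs, hence simply connected with analytic boundary, so $\mathbb{P}_n(r)\in\mathcal{G}_n^{\omega}\subset\mathcal{G}_n$; by Theorem~\ref{pierwsze twierdzenie} together with Remark~\ref{uwaga} (which permits an arbitrary domain in the target slot), the map
\[
\Phi_0\colon\mathcal{L}\bigl(H(\mathbb{P}_n(r)),H(G_2)\bigr)\longrightarrow H\bigl(\mathbb{P}_n(r)^c\times G_2\bigr),\qquad \Phi_0(B)(\zeta,z):=B\Bigl(\tfrac{1}{w_1-\zeta_1}\cdots\tfrac{1}{w_n-\zeta_n}\Bigr)(z),
\]
is a topological isomorphism, in particular injective. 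Since $G_1\subset\mathbb{P}_n(r)$, restriction defines a continuous linear map $\rho\colon H(\mathbb{P}_n(r))\to H(G_1)$ (a compact subset of $G_1$ is compact in $\mathbb{P}_n(r)$), and precomposition with it gives the linear map $\rho^{*}\colon\mathcal{L}(H(G_1),H(G_2))\to\mathcal{L}(H(\mathbb{P}_n(r)),H(G_2))$, $A\mapsto A\circ\rho$. For $\zeta\in\mathbb{P}_n(r)^c$ the kernel $w\mapsto\prod_{k}(w_k-\zeta_k)^{-1}$ is holomorphic on all of $\mathbb{P}_n(r)$, so $\rho$ carries it to exactly the function used in the definition of $\Phi(A)$; hence $\Phi=\Phi_0\circ\rho^{*}$, and it suffices to show that $\rho^{*}$ is injective.

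If $\rho^{*}(A)=0$, then $A$ annihilates the subspace $\rho\bigl(H(\mathbb{P}_n(r))\bigr)\subset H(G_1)$, hence, by continuity, its closure; thus $\rho^{*}$ is injective as soon as $\rho$ has dense range, i.e.\ as soon as the restrictions to $G_1$ of functions holomorphic on $\mathbb{P}_n(r)$ are dense in $H(G_1)$. Since every $g\in H(\mathbb{P}_n(r))$ is a locally uniform limit of partial sums of its Taylor series, this reduces to density of polynomials in $H(G_1)$. I would approach the latter by a Runge-type argument: given $f\in H(G_1)$ and a compact $K\subset G_1$, cover $K$ by finitely many closed polydiscs lying in $G_1$, on each of which $f$ is the uniform limit of its Taylor polynomials; the remaining --- and genuinely substantive --- task is to splice these local polynomial approximations into one polynomial that is uniformly close to $f$ on all of $K$, which is exactly a Runge (polynomial-convexity) condition on the placement of $K$, and hence of $G_1$, inside $\mathbb{P}_n(r)$.

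This density of $\rho$ is the step I expect to be the main obstacle: the reduction above is formal, but the polynomial approximation it calls for must be controlled simultaneously over all compacta $K\subset G_1$. It is automatic, for instance, when $G_1$ is a complete $n$-circled domain (there every holomorphic function has a globally convergent monomial expansion), which is precisely the situation relevant to Theorems~\ref{thm:4.8} and~\ref{thm:4.9}. Once $\rho$ is known to have dense range, continuity of the operators in $\mathcal{L}(H(G_1),H(G_2))$ closes the argument. Finally, one obtains only a monomorphism rather than an isomorphism because $\rho^{*}$ is generally not surjective: a continuous operator on $H(\mathbb{P}_n(r))$ need not extend to a continuous operator on the larger space $H(G_1)$, so $\Phi=\Phi_0\circ\rho^{*}$ inherits injectivity but not surjectivity.
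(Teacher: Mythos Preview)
Your factorization $\Phi=\Phi_0\circ\rho^{*}$ is exactly the route the paper has in mind: the corollary is stated without proof immediately after Theorem~\ref{pierwsze twierdzenie} and the remark allowing an arbitrary target domain $G_2$, and is meant to fall out of those two ingredients. Your reduction of the injectivity of $\rho^{*}$ to the density of the image of $\rho$, and hence of polynomials, in $H(G_1)$ is correct and is the only substantive step.

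Your hesitation about that density step is not merely a technical obstacle---it is a genuine counterexample to the corollary as stated. Take $n=1$, $G_1=\{\,1/2<|w|<1\,\}$, any $r>1$, any domain $G_2$, and define $A\in\mathcal{L}(H(G_1),H(G_2))$ by sending $f$ to the constant function $\tfrac{1}{2\pi i}\int_{|w|=3/4}f(w)\,dw$. Then $A(1/w)=1$, so $A\neq 0$; yet for $|\zeta|\geq r$ the kernel $w\mapsto(w-\zeta)^{-1}$ is holomorphic on $\overline{\mathbb{D}(3/4)}$, whence $\Phi(A)(\zeta,z)=0$ identically. Thus $\Phi$ fails to be injective for this $G_1$, and no Runge-type patching will rescue the argument in general. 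The paper only invokes the corollary in the proof of Theorem~\ref{thm:4.8} with $G_1=\Delta(k)$ a polydisc, where Taylor expansion gives density of polynomials at once; your observation that the same holds for any complete $n$-circled $G_1$ already covers every application. So your proof is correct and complete precisely where the paper needs it, and the stated generality of the corollary is an overreach rather than a gap in your argument.
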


\subsection{Linear operators between spaces of holomorphic functions on complete $n$-circled domains}

Let 
$$\mathbb{C}^n\owns (z_1,\ldots,z_n)\xrightarrow{R} (|z_1|,\ldots,|z_n|)\in\mathbb{R}^n_{\geq 0}.$$
A set $A\subset\mathbb{C}^n$ is called \emph{\(n\)-circled} if it satisfies $A=R^{-1}(R(A))$. Moreover, $A$ is said to be \emph{complete \(n\)-circled} if 
$$A=\bigcup_{z\in A}\mathbb{P}_n(|z_1|,\ldots,|z_n|).$$

\begin{proposition}
If $G\subset\mathbb{C}^n$ is a bounded, complete $n$-circled domain, then $H(\overline{G})$ is dense in $B(G)$. In particular, polynomials are dense in $B(G)$.
\end{proposition}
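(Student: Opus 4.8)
The plan is to reduce the general bounded complete $n$-circled case to the polydisc situation already handled, using the fact that a bounded complete $n$-circled domain $G$ is a union of polydiscs. First I would fix $f\in B(G)$ and observe that, because $G$ is complete $n$-circled and bounded, there is a polyradius $r\in\mathbb{R}_{>0}^n$ with $G\subset\mathbb{P}_n(r)$, and for each $t\in(0,1)$ the dilate $tG:=\{tz:z\in G\}$ satisfies $\overline{tG}\subset G$ (here I use completeness: if $z\in G$ then the closed polydisc $\overline{\mathbb{P}_n}(|z_1|,\ldots,|z_n|)\subset\overline G$, and a short argument with the dilation shows $\overline{tG}$ is a compact subset of the open set $G$). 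Define $f_t(z):=f(tz)$. Then $f_t$ is holomorphic on the open set $(1/t)G\supset\overline G$, hence $f_t\in H(\overline G)$.

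The key step is then to show $f_t\to f$ in the norm $\lVert\cdot\rVert_{\overline G}$ as $t\to 1^-$. Since $f$ is uniformly continuous on the compact set $\overline G$, given $\varepsilon>0$ there is $\delta>0$ with $|f(z)-f(w)|<\varepsilon$ whenever $z,w\in\overline G$ and $|z-w|<\delta$; choosing $t$ close enough to $1$ makes $|tz-z|=(1-t)|z|\le (1-t)\sup_{z\in\overline G}|z|<\delta$ for all $z\in\overline G$, and since $tz\in\overline G$ as well (by completeness $tz\in G\subset\overline G$), we get $\lVert f_t-f\rVert_{\overline G}<\varepsilon$. This gives density of $H(\overline G)$ in $B(G)$.

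For the final assertion, that polynomials are dense in $B(G)$, I would note that each $f_t\in H(\overline G)$ is holomorphic on a neighborhood of $\overline G$, in particular on some polydisc... no — rather, on a complete $n$-circled neighborhood; here it is cleanest to use that $f_t$ is holomorphic on $(1/t)G$, a bounded complete $n$-circled domain containing $\overline G$, so its Taylor series at the origin converges uniformly on the compact set $\overline G$ (convergence of the Taylor expansion is uniform on compact subsets of a complete $n$-circled domain of holomorphy; more elementarily, one can cover $\overline G$ by finitely many polydiscs compactly contained in $(1/t)G$ and use uniform convergence of the power series there). Truncating the Taylor series yields polynomials approximating $f_t$ uniformly on $\overline G$, hence in $B(G)$; combined with the previous paragraph this proves polynomials are dense in $B(G)$.

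The main obstacle I anticipate is the topological bookkeeping around completeness: verifying carefully that $\overline{tG}\subset G$ for $t\in(0,1)$ and that $tz\in\overline G$ for $z\in\overline G$, i.e. that the dilations genuinely push $\overline G$ into the interior. This rests on the completeness hypothesis rather than on any integral representation, so unlike the earlier propositions no Cauchy formula or Riemann-mapping machinery is needed — the argument is purely a dilation/uniform-continuity argument, and the only care required is in the elementary point-set topology of complete $n$-circled sets.
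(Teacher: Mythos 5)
Your proposal is correct and follows essentially the same route as the paper: the paper likewise defines the dilates $\varphi_r(z):=\varphi(rz)$, notes they lie in $H(\overline G)$ and converge to $\varphi$ in $B(G)$ as $r\to 1$, and then deduces polynomial density from density of polynomials in $H(\overline G)$ (your Taylor-series truncation). Your additional verifications (that $t\overline G\subset G$ by completeness, and the uniform-continuity estimate) are exactly the details the paper leaves implicit.
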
 
\begin{proof}
For $\varphi\in B(G)$ define 
$$\varphi_r(z):=\varphi(rz),\ \ 0< r\leq 1.$$
Clearly, every function $\varphi_r(z)$ belongs to $H(\overline{G})$ and $\varphi_r$ converges to $\varphi$ in $B(G)$ as $r\rightarrow 1$. Since polynomials are dense in $H(\overline{G})$, it follows that they are also dense in $B(G)$.
\end{proof}

Let $G$ be a bounded complete n-circled domain and let $M$ the set of vectors $k$ whose coordinates are relatively prime nonnegative integers. For every $k\in M$ let $\eta (k)$ be a point at which the monomial $\lvert z^k\rvert$ attains a maximum on $\partial G$ (such a point may not be unique, in which case we choose one of them). The corresponding polydisc $\mathbb{P}_n\big{(}R(\eta (k))\big{)}$ is denoted simply by $\Delta(k)$.
Define the set
$$G^d:=\bigcup_{a\in M}\partial_0 \Delta(k)^c.$$
Finally, let $\Delta(0)$ denote any polydisc centered at the origin and contained in $G$.

\begin{thm}\textup{(\cite{Leeuv})}\label{Aizenberg}
For any $f\in H(G)\cap\mathcal{C}(\overline{G})$ the integral representation 
$$f(z)=\frac{1}{(2\pi i)^n}\left[\int_{\partial_0\Delta(0)}f(\zeta)\frac{d\zeta}{\zeta}+\sum_{k\in M}z^k \int_{\partial_0 \Delta(k)}\frac{f(\zeta)}{\zeta^k - z^k}\frac{d\zeta}{\zeta} \right]$$
holds for $z\in G$, where the series converges absolutely and uniformly on compact subsets of $G$.
\end{thm}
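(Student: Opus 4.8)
The plan is to expand $f$ in its Taylor series about the origin and regroup the monomials according to their primitive direction, matching each resulting group with one of the integrals over $\partial_0\Delta(k)$ appearing in the statement. Since $G$ is open and complete $n$-circled, $\overline{\mathbb{P}_n(R(z))}\subset G$ for every $z\in G$, and it follows (a standard property of complete $n$-circled domains; see \cite{Jarnicki}) that the Taylor series $f(z)=\sum_{\alpha\ge 0}c_\alpha z^\alpha$ converges to $f$ on all of $G$. Each multi-index $\alpha\ne 0$ has a unique representation $\alpha=pk$ with $p=\gcd(\alpha_1,\dots,\alpha_n)\in\mathbb{N}$ and $k=\alpha/p\in M$, while the constant term equals $c_0=f(0)=\tfrac{1}{(2\pi i)^n}\int_{\partial_0\Delta(0)}f(\zeta)\tfrac{d\zeta}{\zeta}$ by the Cauchy formula on $\Delta(0)\subset G$. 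Granting that the regrouping is legitimate (it is, since $\sum_\alpha|c_\alpha z^\alpha|<\infty$ for $z\in G$), we get $f(z)=c_0+\sum_{k\in M}\sum_{p\ge 1}c_{pk}z^{pk}$, so it suffices to prove that for each $k\in M$
\[
\frac{z^k}{(2\pi i)^n}\int_{\partial_0\Delta(k)}\frac{f(\zeta)}{\zeta^k-z^k}\frac{d\zeta}{\zeta}=\sum_{p\ge 1}c_{pk}z^{pk},\qquad z\in G .
\]

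To prove this, fix $k\in M$. Since $\eta(k)\in\partial G$ and $G$ is complete $n$-circled, $\partial_0\Delta(k)\subset\overline{\Delta(k)}\subset\overline G$, so the integral makes sense; and by the maximum principle applied to the polynomial $w\mapsto w^k$, for $z\in G$ and $\zeta\in\partial_0\Delta(k)$ one has $|\zeta^k|=|\eta(k)^k|=\max_{\overline G}|w^k|>|z^k|$. Hence the geometric series $(\zeta^k-z^k)^{-1}=\sum_{p\ge 0}z^{pk}\zeta^{-(p+1)k}$ converges uniformly in $\zeta\in\partial_0\Delta(k)$, and integrating it term by term reduces the left-hand side above to $\sum_{q\ge 1}z^{qk}\cdot\tfrac{1}{(2\pi i)^n}\int_{\partial_0\Delta(k)}\zeta^{-qk}f(\zeta)\tfrac{d\zeta}{\zeta}$. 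It remains to identify the inner integral with $c_{qk}$. This is the Cauchy coefficient formula when $f$ extends holomorphically past $\partial_0\Delta(k)$; in general $\Delta(k)$ may touch $\partial G$, so I would apply it to the dilates $f_s(\zeta):=f(s\zeta)$, $0<s<1$, which are holomorphic on the neighbourhood $\tfrac1sG$ of $\overline G\supset\overline{\Delta(k)}$ (using $s\overline G\subset G$, a standard property of bounded complete $n$-circled domains) and converge to $f$ uniformly on $\overline G$; since the Taylor coefficients of $f_s$ are $s^{|\alpha|}c_\alpha$, letting $s\to 1^-$ gives $\tfrac{1}{(2\pi i)^n}\int_{\partial_0\Delta(k)}\zeta^{-qk}f(\zeta)\tfrac{d\zeta}{\zeta}=c_{qk}$. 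Summing over $k\in M$ and adding $c_0$ recovers the Taylor expansion of $f$, which is the asserted formula.

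The remaining issue is the convergence of the series over $k\in M$, which I would control with a single estimate: for every compact $K\subset G$ there is $\theta=\theta(K)\in(0,1)$ with $|z^k|\le\theta^{|k|}\,|\eta(k)^k|$ for all $z\in K$ and all $k\in M$. Indeed, $R(G)$ is open in $\mathbb{R}^n_{\ge 0}$, so by compactness $\theta^{-1}R(K)\subset R(G)$ for some $\theta<1$; since $G$ is $n$-circled this gives $\theta^{-1}z\in G\subset\overline G$ for $z\in K$, whence $\theta^{-|k|}|z^k|=|(\theta^{-1}z)^k|\le\max_{\overline G}|w^k|=|\eta(k)^k|$. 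Together with the elementary bound $\int_{|\zeta_j|=\rho_j}|d\zeta_j|/|\zeta_j|=2\pi$ this yields
\[
\Bigl|\,z^k\!\int_{\partial_0\Delta(k)}\frac{h(\zeta)}{\zeta^k-z^k}\frac{d\zeta}{\zeta}\,\Bigr|\le\frac{(2\pi)^n}{1-\theta}\,\lVert h\rVert_{\overline G}\,\theta^{|k|}
\]
for any $h\in\mathcal{C}(\overline G)$, uniformly in $z\in K$, and since $\#\{k\in M:|k|=m\}$ grows only polynomially in $m$, the series $\sum_{k\in M}\theta^{|k|}$ converges. Applied with $h=f$ this is precisely the absolute and uniform convergence on compact subsets of $G$ claimed in the theorem; the term-by-term integration of the geometric series and the rearrangement of the Taylor series into groups are already justified, for each fixed $z\in G$, by the uniform bound $|z^k/\zeta^k|<1$ on $\partial_0\Delta(k)$ and by the absolute convergence of $\sum_\alpha c_\alpha z^\alpha$, respectively.

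I expect this uniform control over the infinitely many directions $k\in M$ --- together with the standard but not entirely trivial inclusions $s\overline G\subset G$ (for $0<s<1$) and $\overline{\Delta(k)}\subset\overline G$, which together with continuity of $f$ on $\overline G$ handle the possibility that $\Delta(k)$ touches $\partial G$ --- to be the main obstacle; once these are in place, the algebraic regrouping of the Taylor series by primitive directions and the geometric-series computation are routine.
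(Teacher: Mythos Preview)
The paper does not prove this statement; it is cited from de Leeuw \cite{Leeuv} without argument. Your proof is correct and follows the natural route: regroup the Taylor series of $f$ by primitive direction $k\in M$, identify each block $\sum_{p\ge1}c_{pk}z^{pk}$ with the corresponding integral via the geometric expansion of $(\zeta^k-z^k)^{-1}$ and Cauchy's coefficient formula (legitimized on $\partial_0\Delta(k)\subset\overline G$ by the dilation $f\mapsto f(s\,\cdot)$ and $s\to1^-$), and control the series over $k$ through the estimate $|z^k|\le\theta^{|k|}|\eta(k)^k|$ on compacta together with the polynomial growth of $\#\{k\in M:|k|=m\}$---the same two ingredients the paper itself uses downstream in Propositions~\ref{dalej} and~\ref{need}.
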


For $k\in M$ define
$$H_k(G):=\left\{f\in H(G):f(z)=\sum_{l=1}^\infty c_{l}z^{kl}\textup{ for some }(c_{l})_{l=1}^\infty\in\mathbb{C}^{\mathbb{N}}\right\}.$$
We also set
$$H_0:=\{f\in H(G): f\textup{ is constant}\}.$$
Every space $H_k(G)$ and $H_0(G)$ is endowed with the topology induced from $H(G)$. 

Next, we introduce the projections 
$$P_k:H(G)\owns \Sigma_{m\in\mathbb{N}^n} c_{m}z^{m}\mapsto\Sigma_{l=1}^\infty c_{kl}z^{kl}\in H_k(G) 
.$$

\begin{remark}\label{uwaga} Let $k\in M$. Then:
\begin{enumerate}
\item $H_k(G)$ is a closed subspace of $H(G)$.
\item For $f\in H(G)$ we have
$$(P_k f)(z)=\frac{z^k}{(2\pi i)^n} \int_{\partial_0 \Delta(k)}\frac{f(\zeta)}{\zeta^k - z^k}\frac{d\zeta}{\zeta},\ \ \ \ z\in G.$$
\end{enumerate}
\end{remark}

\begin{proposition}
Topology induced from $H(G)$ on $H_k(G)$ coincides with the topology induced from $H(\Delta(k))$.
\end{proposition}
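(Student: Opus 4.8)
The plan is to identify each of the two spaces with the Fr\'echet space $H\bigl(\mathbb{D}(\rho_k)\bigr)$ of one--variable holomorphic functions on the disc of radius $\rho_k:=\lVert z^k\rVert_{\overline G}=\bigl|\eta(k)^k\bigr|$, with its usual compact--open topology; the proposition then follows at once. Write $\pi_k(z):=z^k=z_1^{k_1}\cdots z_n^{k_n}$. Because $G$ contains the polydisc $\Delta(0)$ about the origin and $k\neq 0$, the function $\pi_k$ is non--constant on the domain $G$ and not identically zero, so $\rho_k>0$ and, by the maximum principle, $|\pi_k|<\rho_k$ on $G$; the same strict bound holds on $\Delta(k)=\mathbb{P}_n(R(\eta(k)))$, on whose closure $|\pi_k|$ still attains $\rho_k$. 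Thus $\pi_k$ sends $G$ and $\Delta(k)$ into $\mathbb{D}(\rho_k)$, and for every $g\in H\bigl(\mathbb{D}(\rho_k)\bigr)$ the composites $g\circ\pi_k$ lie in $H_k(G)$, respectively $H_k(\Delta(k))$.

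I would first check that $g\mapsto g\circ\pi_k$ is a bijection onto $H_k(G)$, and likewise onto $H_k(\Delta(k))$. Surjectivity of $\pi_k$ onto $\mathbb{D}(\rho_k)$ (hence injectivity of $g\mapsto g\circ\pi_k$) follows from the complete $n$--circled structure --- scaling a point $z^\ast\in G$ toward $0$ stays in $G$, and rotating one coordinate with nonzero exponent adjusts $\arg\pi_k(z^\ast)$ freely since $G$ is $n$--circled. For the converse, if $f=\sum_{l=1}^\infty c_lz^{kl}\in H_k(G)$, the Taylor expansion of $f$ at the origin converges absolutely throughout the complete $n$--circled domain $G$, so for any $z^\ast\in G$ the subseries $\sum_l c_l\,\pi_k(z^\ast)^l$ converges absolutely; letting $|\pi_k(z^\ast)|$ approach $\rho_k$ shows that $g(w):=\sum_l c_lw^l$ defines an element of $H\bigl(\mathbb{D}(\rho_k)\bigr)$ with $f=g\circ\pi_k$. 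The same reasoning applies verbatim to $\Delta(k)$, where $\sup|\pi_k|=\rho_k$ as well.

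Next I would show that $g\mapsto g\circ\pi_k$ is a topological isomorphism $H\bigl(\mathbb{D}(\rho_k)\bigr)\to H_k(G)$. Continuity: for compact $L\subset G$ the set $\pi_k(L)$ is a compact subset of $\mathbb{D}(\rho_k)$ since $\lVert\pi_k\rVert_L<\rho_k$, whence $\lVert g\circ\pi_k\rVert_L\leq\lVert g\rVert_{\pi_k(L)}$. Continuity of the inverse: given $\rho<\rho_k$, choose $z^\ast\in G$ with all coordinates nonzero (any vanishing coordinate necessarily has zero exponent, and may be perturbed without changing $\pi_k$) and $|\pi_k(z^\ast)|>\rho$, which is possible as $\sup_G|\pi_k|=\rho_k$; completeness gives $\overline{\mathbb{P}_n\bigl(sR(z^\ast)\bigr)}\subset G$ for $s<1$, and $\pi_k$ maps this closed polydisc onto the closed disc $\overline{\mathbb{D}\bigl(s^{|k|}\,|\pi_k(z^\ast)|\bigr)}$. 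Taking $s<1$ with $s^{|k|}\,|\pi_k(z^\ast)|>\rho$ and setting $L:=\overline{\mathbb{P}_n(sR(z^\ast))}$ we get $\pi_k(L)\supseteq\overline{\mathbb{D}(\rho)}$, hence $\lVert g\rVert_{\overline{\mathbb{D}(\rho)}}\leq\lVert g\circ\pi_k\rVert_L$; since the seminorms $\lVert\cdot\rVert_{\overline{\mathbb{D}(\rho)}}$, $\rho<\rho_k$, generate the topology of $H\bigl(\mathbb{D}(\rho_k)\bigr)$, the inverse is continuous. The identical argument with $\Delta(k)$ in place of $G$ gives a topological isomorphism $H\bigl(\mathbb{D}(\rho_k)\bigr)\to H_k(\Delta(k))$; composing it with the inverse of the first gives a topological isomorphism $H_k(G)\to H_k(\Delta(k))$ which is the identity on the coefficient data $(c_l)_l$, i.e.\ the canonical identification of the two spaces. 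This is precisely the statement to be proved.

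The one non--formal step I expect is the covering assertion in the last paragraph: that a suitable closed polydisc inside $G$ (respectively $\Delta(k)$) is mapped by $z\mapsto z^k$ \emph{onto} a closed disc exhausting $\mathbb{D}(\rho_k)$. This uses the complete $n$--circled structure to fit the polydisc inside the domain, together with the elementary fact that --- since some exponent $k_j$ is nonzero --- the argument $\sum_j k_j\arg z_j$ runs over all of $\mathbb{R}/2\pi\mathbb{Z}$ as the $\arg z_j$ vary freely, so the modulus and argument of $z^k$ can be prescribed independently. Everything else reduces to the maximum modulus principle and routine estimates with compact--open seminorms.
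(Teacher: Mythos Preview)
Your argument is correct and takes a genuinely different route from the paper. The paper proves the non-trivial direction by a direct estimate: for $f\in H_k(G)$ it writes $f=P_kf$ via the integral formula of Remark~\ref{uwaga}(2), expands the Cauchy kernel as a geometric series, and uses the defining property of $\eta(k)$ (namely $\lVert z^{kl}\rVert_{rG}=\lVert z^{kl}\rVert_{r\Delta(k)}$) to bound $\lVert f\rVert_{rG}$ by a constant times $\lVert f\rVert_{s\Delta(k)}$ for $0<r<s<1$. Your approach instead factors everything through one variable: you show that composition with $\pi_k(z)=z^k$ gives topological isomorphisms $H(\mathbb{D}(\rho_k))\to H_k(G)$ and $H(\mathbb{D}(\rho_k))\to H_k(\Delta(k))$, so the two induced topologies agree because both are pulled back from the same Fr\'echet space.

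Your method is more structural and explains \emph{why} the result holds---both topologies are governed by the single quantity $\rho_k=\lVert z^k\rVert_{\overline G}$. The paper's method is more computational but has a practical payoff: the explicit inequality it produces (labelled~(\ref{oszacowanie projekcji}) in the next proposition) is reused verbatim in Proposition~\ref{dalej}, Proposition~\ref{need}, and Theorems~\ref{thm:4.8}--\ref{thm:4.9}, so the estimate does double duty. If you adopt your proof in this paper you would still need to extract a comparable quantitative bound for those later applications; your covering argument (the closed polydisc $\overline{\mathbb{P}_n(sR(z^\ast))}$ mapping onto $\overline{\mathbb{D}(s^{|k|}|\pi_k(z^\ast)|)}$) does yield such a bound, but it is less immediately in the form the subsequent proofs want.
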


\begin{proof}
It is clear that the topology induced from $H(G)$ is finer than that induced from $H(\Delta(k))$.  To prove the converse, fix $0<r<s<1$ . The following estimate holds for $f\in H_k(G)$
\begin{multline*}
\lVert f\rVert_{rG}\overset{\textup{Rem.}\,\ref{uwaga}}{=}\big\lVert \frac{z^k}{(2\pi i)^n}\int_{s\partial_0 \Delta (k)}\frac{f(\zeta)}{\zeta^k - z^k}\frac{d\zeta}{\zeta}\big\rVert_{r G}=\big\lVert\frac{1}{(2\pi i)^n)}\sum_{l=1}^\infty z^{kl}\int_{s\partial_0 \Delta(k)}\frac{f(\zeta)}{\zeta^{kl}}\frac{d\zeta}{\zeta}\big\lVert_{rG} \\
\leq \sum_{l=1}^\infty \big\lVert\frac{1}{(2\pi i)^n} z^{kl}\int_{s\partial_0 \Delta(k)}\frac{f(\zeta)}{\zeta^{kl}}\frac{d\zeta}{\zeta}\big\lVert_{rG}\overset{(*)}{\leq} \sum_{l=1}^\infty \big\lVert\frac{1}{(2\pi i)^n} z^{kl}\int_{s\partial_0 \Delta(k)}\frac{f(\zeta)}{\zeta^{kl}}\frac{d\zeta}{\zeta}\big\lVert_{r\Delta(k)}\leq  \lVert f\lVert_{s\Delta(k)}\sum_{l=1}^{\infty}\big(\frac{s}{r}\big)^{|k|l},
\end{multline*}
where (*) holds since $|z^k|\leq |\zeta^k|$ for $z\in G$ and $\zeta\in\Delta(k)$.
\end{proof}

\begin{proposition}\label{dalej}
$P_k$ is continuous for every $k\in M$.
\end{proposition}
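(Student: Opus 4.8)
The plan is to show that $P_k : H(G) \to H_k(G)$ is continuous by exhibiting, for each compact set in $G$, a bound on $\lVert P_k f\rVert$ in terms of a single sup-norm of $f$ over a compact subset of $G$. Concretely, I would fix $0 < r < 1$ and estimate $\lVert P_k f\rVert_{rG}$. Using the integral formula from Remark~\ref{uwaga}(2),
\[
(P_k f)(z) = \frac{z^k}{(2\pi i)^n}\int_{\partial_0 \Delta(k)} \frac{f(\zeta)}{\zeta^k - z^k}\,\frac{d\zeta}{\zeta},
\]
I would first check that the contour can be taken to be $\partial_0(s\Delta(k))$ for a suitable $s$ with $r < s < 1$ (this is legitimate because $f$ is holomorphic on all of $G$, and $\overline{s\Delta(k)} \subset G$ by definition of $\Delta(k)$ as a polydisc realizing the maximum of $|z^k|$ on $\partial G$, so $s\Delta(k) \Subset G$; here one uses completeness of the $n$-circled domain to guarantee $\overline{s\Delta(k)} \subset G$). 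On that contour $|\zeta^k|$ is bounded below and, for $z \in \overline{rG}$, $|z^k| \le r^{|k|}\lVert |z^k|\rVert_{\overline G}$ is strictly smaller, so the geometric expansion $\frac{1}{\zeta^k - z^k} = \sum_{l\ge 1} \frac{z^{k(l-1)}}{\zeta^{kl}}$ converges uniformly, exactly as in the preceding proposition.

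The key estimate then reads
\[
\lVert P_k f\rVert_{rG} \;\le\; \frac{1}{(2\pi)^n}\,\lVert f\rVert_{\overline{s\Delta(k)}}\;\prod_{j=1}^n \frac{2\pi}{\,\text{(radius)}\,}\cdot\big(\text{const}\big)\cdot \sum_{l=1}^\infty \Big(\tfrac{r}{s}\Big)^{|k|(l-1)},
\]
where the geometric series converges since $r < s$, and $\lVert f\rVert_{\overline{s\Delta(k)}} \le \lVert f\rVert_{K}$ for the compact set $K := \overline{s\Delta(k)} \subset G$. This gives a bound of the form $\lVert P_k f\rVert_{rG} \le C_{r,k}\,\lVert f\rVert_{K}$ with $C_{r,k}$ independent of $f$. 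Since the seminorms $\lVert \cdot\rVert_{rG}$, $0 < r < 1$, together with the seminorms on an exhaustion by compacts are cofinal in the topology of $H_k(G)$ (indeed $H_k(G) \subset H(\Delta(k))$ and, by the previous proposition, its induced topology agrees with that from $H(\Delta(k))$, for which the dilated polydiscs are a cofinal family of compacts), this single inequality establishes continuity of $P_k$ at $0$, hence everywhere by linearity.

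The main obstacle is bookkeeping rather than conceptual: one must be careful that the chosen compact set $K = \overline{s\Delta(k)}$ genuinely sits inside $G$ (this is where boundedness and completeness of the $n$-circled domain $G$, together with the defining property of $\eta(k)$, are used) and that the contour deformation from $\partial_0\Delta(k)$ to $\partial_0(s\Delta(k))$ is valid — both facts follow from $s\Delta(k) \Subset G$ and holomorphy of $f$ on $G$. A secondary point to handle cleanly is the case of small $|k|$ and the uniformity of the constant $C_{r,k}$ in $z \in \overline{rG}$; these are routine once the geometric series bound is in place. I expect the whole argument to be essentially a variant of the estimate already carried out in the proof of the preceding proposition, with $P_k f$ in place of a generic $f \in H_k(G)$.
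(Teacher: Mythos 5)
Your plan is essentially the paper's own proof: both use the integral formula of Remark~\ref{uwaga}(2) with the contour shrunk to $s\,\partial_0\Delta(k)$, expand $\frac{1}{\zeta^k-z^k}$ as a geometric series to get a bound of the form $\lVert P_k f\rVert \leq \frac{r^{|k|}}{s^{|k|}-r^{|k|}}\lVert f\rVert_{s\Delta(k)}$, and invoke the preceding proposition identifying the topology on $H_k(G)$. The only cosmetic difference is that you estimate $\lVert P_k f\rVert_{rG}$ while the paper records the bound on $r\Delta(k)$; this does not change the argument.
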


\begin{proof}
By Remark \ref{uwaga} and the previous proof, for $f\in H_k(G)$ we have
\begin{equation}\label{oszacowanie projekcji}
\lVert P_kf\rVert_{r\Delta(k)}=\big{\lVert} z\mapsto\frac{z^k}{(2\pi i)^n}\int_{s\partial_0 \Delta(k)}\frac{f(\zeta)}{\zeta^k-z^k}\frac{d\zeta}{\zeta}\big{\rVert}_{r\Delta(k)}\leq \frac{r^{|k|}}{s^{|k|}-r^{|k|}}\lVert f\rVert_{s\Delta(k)},
\end{equation}
where $0<r<s<1$. This proves continuity of $P_k$. Finally, note that $P_k^2=P_k$. 
\end{proof}

As an immediate consequence, we obtain a direct connection between $H(G)$ and $\prod_{k\in M}H_k (G)$.

\begin{corollary}\label{rozkład H(G)}
Let $\left(Q_k\right)_{k\in M}\subset \prod_{k\in M}H_k(G)$. Then, $\sum_{k\in M}Q_k$ belongs to $H(G)$ if and only if, for every $0<r<1$ there exist a constant $C>0$ and $0<\theta<1$ such that $\lVert Q_k\rVert_{r G}\leq \,C\theta^{|k|}$ for all $k\in M$.
\end{corollary}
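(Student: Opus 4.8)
The statement is a two-sided estimate characterizing when a formal sum $\sum_{k\in M}Q_k$ of homogeneous pieces $Q_k\in H_k(G)$ converges to an element of $H(G)$, so the plan is to prove the two implications separately, using the integral-representation machinery already set up (Theorem~\ref{Aizenberg}, Remark~\ref{uwaga}, and the projection estimate \eqref{oszacowanie projekcji}).

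For the "if" direction, I would assume that for each $0<r<1$ there are $C>0$ and $0<\theta<1$ with $\lVert Q_k\rVert_{rG}\le C\theta^{|k|}$. Since $M$ contains only finitely many $k$ with $|k|$ below any fixed bound (the coordinates of $k\in M$ are coprime nonnegative integers, so $\#\{k\in M:|k|\le N\}$ grows only polynomially in $N$), the bound $\sum_{k\in M}C\theta^{|k|}<\infty$ shows the series $\sum_{k\in M}Q_k$ converges absolutely and uniformly on $\overline{rG}$ for every $r<1$; these sets exhaust $G$ because $G$ is a complete $n$-circled domain, so the sum defines a function holomorphic on $G$, i.e. an element of $H(G)$. (One should remark that $Q_0$, the constant term, is harmless and can be folded in separately, or simply absorbed by noting $H_0(G)$ is one-dimensional.)

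For the "only if" direction, suppose $f:=\sum_{k\in M}Q_k\in H(G)$. By uniqueness of Taylor coefficients on a complete $n$-circled domain, $Q_k=P_kf$ for every $k$ (and the constant term is $P_0 f$), so it suffices to estimate $\lVert P_kf\rVert_{rG}$. Fix $0<r<1$, pick $s$ with $r<s<1$; by Remark~\ref{uwaga}(2) and the computation preceding \eqref{oszacowanie projekcji} — expanding $\tfrac{z^k}{\zeta^k-z^k}=\sum_{l\ge1}z^{kl}/\zeta^{kl}$ on $s\partial_0\Delta(k)$ and using $|z^k|\le|\zeta^k|$ for $z\in G$, $\zeta\in\Delta(k)$ — one gets $\lVert P_kf\rVert_{rG}\le \big(\tfrac{r}{s}\big)^{|k|}\tfrac{1}{1-(r/s)^{|k|}}\,\lVert f\rVert_{s\Delta(k)}$, hence $\lVert Q_k\rVert_{rG}\le \tfrac{1}{1-(r/s)}\big(\tfrac{r}{s}\big)^{|k|}\lVert f\rVert_{s\Delta(k)}$ once $|k|\ge1$. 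Finally, $s\Delta(k)\subset G$ because $\Delta(k)=\mathbb{P}_n(R(\eta(k)))$ with $\eta(k)\in\partial G$ — for a point $w$ of the closed polydisc $\overline{\Delta(k)}$ we have $|w_j|\le|\eta(k)_j|$, so $sw$ lies in the open complete $n$-circled domain $G$ — and therefore $\lVert f\rVert_{s\Delta(k)}\le \lVert f\rVert_{s'G}$ where $s'<1$ is chosen with $s\overline{\Delta(k)}\subset s'G$ uniformly in $k$ (possible since all $\Delta(k)$ lie in $G$). Setting $C:=\tfrac{1}{1-(r/s)}\lVert f\rVert_{\overline{s'G}}$ and $\theta:=r/s\in(0,1)$ yields the desired bound for all $k\in M$ with $|k|\ge1$, and the $k=0$ term is again trivial.

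The main obstacle is the geometric bookkeeping in the "only if" direction: one must make sure the radii $s\Delta(k)$ can be dominated by a single compact subset $\overline{s'G}\subset G$ with $s'$ independent of $k$, which relies on the fact that each $\Delta(k)$ is inscribed in $G$ at the boundary point $\eta(k)$, so that $\overline{\Delta(k)}\subset \overline{G}$ and a fixed dilation pushes all of them into $G$ simultaneously. Everything else is the now-standard geometric-series estimate for $P_k$ together with the polynomial growth of $\#\{k\in M:|k|\le N\}$, which handles the summability in the "if" direction.
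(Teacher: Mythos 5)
Your proposal is correct and follows essentially the argument the paper intends: the ``only if'' direction is the projection estimate \eqref{oszacowanie projekcji} applied over $rG$ (with $\theta=r/s$ and $\lVert f\rVert_{s\Delta(k)}\le\lVert f\rVert_{sG}$), and the ``if'' direction is the geometric-series plus multi-index counting argument that the paper reuses in Proposition~\ref{need} and Theorem~\ref{thm:4.9}. The only cosmetic difference is your detour through an auxiliary radius $s'$, which is unnecessary since $s\Delta(k)\subset sG\Subset G$ already gives a uniform bound in $k$.
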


By the maximum principe, the evaluation map
$$P_0:H(G)\owns f\mapsto \left( z\mapsto f(0)\right)\in H_0(G)
$$
is also continuous.

The following representation of the identity operator in terms of $Q_l$  is of intrinsic interest and will appear again in what follows.
\begin{proposition}\label{need} The identity operator on $H(G)$ is given by the sum of all projections $P_l$, that is,
$$\mathbb{I}\textup{d}_G=P_0+\Sigma_{k\in M}P_k$$
\end{proposition}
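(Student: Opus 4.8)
The plan is to run everything through the Taylor expansion at the origin together with the unique factorisation of multi-indices along primitive directions. Fix $f\in H(G)$ and write $f(z)=\sum_{m}c_m z^m$, the sum over $m\in(\mathbb{N}\cup\{0\})^n$. Since $G$ is a bounded complete $n$-circled domain, this series converges \emph{absolutely and uniformly on every compact subset of $G$}; equivalently, for every $0<r<1$ one has $\sum_{m}|c_m|\,\lVert z^m\rVert_{rG}<\infty$ (one uses here that $0\in G$, that $tG\subset G$ for $t\in[0,1]$, and hence that every compact subset of $G$ lies in some $rG$ with $r<1$). This is the expansion theorem for complete Reinhardt domains (see \cite{Jarnicki}); alternatively it follows from the device $\varphi_r(z)=\varphi(rz)$ used in the proposition on density of polynomials in $B(G)$ together with Cauchy's inequalities, since $\overline{\sigma G}=\sigma\overline{G}\subset G$ for $\sigma<1$.

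Now split the monomials: each $m\neq 0$ factors \emph{uniquely} as $m=lk$ with $l\in\mathbb{N}$ and $k\in M$, namely $l=\gcd(m_1,\ldots,m_n)$ and $k=m/l$. By the definitions of the projections, $P_0 f\equiv c_0=f(0)$ while $(P_k f)(z)=\sum_{l\ge 1}c_{lk}z^{lk}$ for $k\in M$, so the monomial supports of $P_0 f$ and of the $P_k f$ $(k\in M)$ partition the support of $f$. Because $\sum_{m}|c_m|\,\lVert z^m\rVert_{rG}<\infty$ for every $r<1$, the family $(c_m z^m)_m$ is summable in the Fréchet space $H(G)$ and hence may be regrouped arbitrarily; regrouping according to the factorisation gives
\[
f\;=\;c_0+\sum_{k\in M}\Bigl(\sum_{l\ge 1}c_{lk}z^{lk}\Bigr)\;=\;P_0 f+\sum_{k\in M}P_k f ,
\]
the series over $k\in M$ being convergent in $H(G)$. (If one prefers to cite Corollary \ref{rozkład H(G)} directly: for $r<r'<1$, $\lVert P_k f\rVert_{rG}\le\sum_{l\ge 1}|c_{lk}|\,\lVert z^{lk}\rVert_{rG}\le (r/r')^{|k|}\sum_{l\ge 1}|c_{lk}|\,\lVert z^{lk}\rVert_{r'G}\le (r/r')^{|k|}\sum_{m}|c_m|\,\lVert z^m\rVert_{r'G}$, which is a bound of the form $C\theta^{|k|}$.) As $f$ was arbitrary, $\mathbb{I}\textup{d}_G=P_0+\sum_{k\in M}P_k$.

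A parallel route, closer to the machinery of this section, begins from Theorem \ref{Aizenberg}: for $f\in H(G)\cap\mathcal{C}(\overline{G})$ it reads $f(z)=\frac{1}{(2\pi i)^n}\int_{\partial_0\Delta(0)}f(\zeta)\frac{d\zeta}{\zeta}+\sum_{k\in M}\frac{z^k}{(2\pi i)^n}\int_{\partial_0\Delta(k)}\frac{f(\zeta)}{\zeta^k-z^k}\frac{d\zeta}{\zeta}$ with the series uniformly convergent on compacta; the first term equals $f(0)=(P_0 f)(z)$ by the iterated Cauchy formula on $\Delta(0)\subset G$, and the $k$-th term equals $(P_k f)(z)$ by Remark \ref{uwaga}. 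Thus the identity holds on $H(G)\cap\mathcal{C}(\overline{G})$, which is dense in $H(G)$ (polynomials are, by the density proposition), and it extends to all of $H(G)$ once one checks — again via Corollary \ref{rozkład H(G)} and the estimates in Proposition \ref{dalej} — that $f\mapsto P_0 f+\sum_{k\in M}P_k f$ is a continuous endomorphism of $H(G)$. In either approach the one substantive point, and the main obstacle, is the same: verifying that the regrouped series $\sum_{k\in M}P_k f$ converges in the topology of $H(G)$ and not merely pointwise, which rests entirely on the absolute, locally uniform convergence of the Taylor series on a complete $n$-circled domain; the rest is bookkeeping with the factorisation $m=lk$.
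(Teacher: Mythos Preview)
Your proposal is correct; in fact you give two valid proofs. The paper takes a route closest to your second approach but organised differently: it fixes $0<r<s<1$, uses the integral expression for $P_k$ from Remark~\ref{uwaga} (i.e.\ the de~Leeuw representation of Theorem~\ref{Aizenberg} applied on $sG$) to write the tail $f-P_0f-\sum_{|k|<N}P_kf$ as $\sum_{|k|\ge N}\frac{z^k}{(2\pi i)^n}\int_{s\partial_0\Delta(k)}\frac{f(\zeta)}{\zeta^k-z^k}\frac{d\zeta}{\zeta}$, and then bounds this directly by $\lVert f\rVert_{sG}\sum_{|k|\ge N}\frac{\theta^{|k|}}{1-\theta^{|k|}}$ with $\theta=r/s$, so no density step is needed.

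Your first approach---absolute convergence of the Taylor series on a complete Reinhardt domain, plus the unique factorisation $m=lk$ with $k\in M$ and $l=\gcd(m)$, hence unconditional regrouping in the Fr\'echet space---is a genuinely different and more elementary argument: it bypasses both Theorem~\ref{Aizenberg} and the integral estimate~(\ref{oszacowanie projekcji}) entirely. The little computation you include, $\lVert z^{lk}\rVert_{rG}=(r/r')^{l|k|}\lVert z^{lk}\rVert_{r'G}\le(r/r')^{|k|}\lVert z^{lk}\rVert_{r'G}$, is exactly what makes Corollary~\ref{rozkład H(G)} fire and is correct. What the paper's route buys is a uniform quantitative tail bound $\lVert f-P_0f-\sum_{|k|<N}P_kf\rVert_{rG}\le C(r,s)\lVert f\rVert_{sG}$ with an explicit constant, which is convenient for the operator estimates in Theorems~\ref{thm:4.8} and~\ref{thm:4.9}; your first route gives the identity with less machinery but with constants depending on $f$ through $\sum_m|c_m|\,\lVert z^m\rVert_{r'G}$.
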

\begin{proof}
Fix $0<r<s<1$ and choose $N>1$ such that $\left(\frac{r}{s}\right)^N\leq \frac{1}{2}$. Then, for every $f\in H(G)$ we have
\begin{multline*}
\lVert f-P_0 f-\sum_{k\in M,\,|k|< N}P_kf\rVert_{rG}=\big{\lVert}\frac{1}{(2\pi i)^n}\sum_{k\in M,\,|k|\geq N}z^k \int_{s\partial_0 \Delta(k)}\frac{f(\zeta)}{\zeta^k - z^k}\frac{d\zeta}{\zeta}\big{\rVert}  \\ 
\leq \sum_{k\in M,\,|k|\geq N}\big{\lVert}\frac{1}{(2\pi i)^n}\int_{s\partial_0 \Delta(k)}\frac{f(\zeta)}{\zeta^k - z^k}\frac{d\zeta}{\zeta}\big{\rVert} \overset{\textup{(\ref{oszacowanie projekcji})}}{\leq}\ \rVert f\rVert_{s G}\sum_{k\in M,\,|k|\geq N}\frac{\theta^{|k|}}{1-\theta^{|k|}}\leq 2\rVert f\rVert_{s G}\sum_{m=N}^\infty \binom{n+m-1}{n-1}\theta^m,
\end{multline*}
where $\theta=\frac{r}{s}$. The convergence of the final series ensures the desired approximation, completing the proof.
\end{proof}

\begin{thm}\label{thm:4.8}
Let $G_1, G_2 \subset \mathbb{C}^n$ be bounded, complete $n$-circled domains, and let 
$A \colon H(G_1) \to H(G_2)$ be a bounded linear operator.  
Let $M$ denote the index set corresponding to $G_1$.  
Then, for every compact set $K \subset G_2$ there exist a constant $C > 0$ and a family of sets 
$\{L_k\}_{k \in M}$ satisfying $L_k \Subset \Delta(k)$ and $\bigcup_{k \in M} L_k \Subset G_1$ such that 
\[
\lVert (A \circ P_k)f \rVert_{K} \leq C \lVert f \rVert_{L_k}, 
\qquad f \in H(G_1), \; k \in M.
\]

In particular, each operator $A \circ P_k$ extends uniquely to a bounded linear operator 
\[
A \circ P_k \colon H(\Delta(k)) \longrightarrow H(G_2),
\]
and, by a slight abuse of notation, we continue to denote the extension by $A \circ P_k$. 
\end{thm}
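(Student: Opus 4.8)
\emph{Strategy.} The plan is to derive the displayed estimate from the continuity of $A$ together with the quantitative bound on the coordinate projections already present in the proof of Proposition~\ref{dalej}, and then to obtain the concluding statement by a density argument based on the density of polynomials in $H(\Delta(k))$. Throughout I would use the standard facts about a bounded complete $n$-circled domain: the sets $\overline{tG_1}=t\overline{G_1}$, $0<t<1$, exhaust $G_1$ by compact complete $n$-circled sets; $\overline{\Delta(k)}\subseteq\overline{G_1}$ for every $k\in M$ (since $|z_j|\le|\eta(k)_j|$ for all $j$ forces $z\in\overline{G_1}$), hence $\overline{s\Delta(k)}=s\overline{\Delta(k)}\subseteq\overline{sG_1}\subset G_1$ for $0<s<1$; and, because $z^{k}$ has no interior maximum, $\sup_{\overline{tG_1}}|z^{k}|=t^{|k|}\sup_{\overline{G_1}}|z^{k}|=t^{|k|}|\eta(k)^{k}|$. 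I also assume, harmlessly, that each $\eta(k)$ has all coordinates nonzero, so that every $\Delta(k)$ is a genuine polydisc; then $\overline{s\Delta(k)}=\prod_j\overline{\mathbb{D}(s|\eta(k)_j|)}\Subset\mathbb{P}_n(R(\eta(k)))=\Delta(k)$ whenever $0<s<1$.

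\emph{The estimate.} I would fix a compact $K\subset G_2$. Since $H(G_1)$ is metrizable, $A$ is continuous (on a metrizable domain, boundedness is equivalent to continuity), so there are $t\in(0,1)$ and $C_0>0$ with $\|Af\|_K\le C_0\|f\|_{\overline{tG_1}}$ for all $f\in H(G_1)$, whence $\|(A\circ P_k)f\|_K=\|A(P_kf)\|_K\le C_0\|P_kf\|_{\overline{tG_1}}$. Now pick $s\in(t,1)$ and set $L_k:=\overline{s\Delta(k)}$, so $L_k\Subset\Delta(k)$ and $L_k\subset G_1$. Writing the Taylor expansion $f=\sum_m c_mz^m$ at the origin one has $P_kf=\sum_{l\ge1}c_{kl}z^{kl}$, and Cauchy's estimates on the polydisc $L_k$ give $|c_{kl}|\le\|f\|_{L_k}\,(s^{|k|}|\eta(k)^{k}|)^{-l}$. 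Since $|z^{kl}|\le(t^{|k|}|\eta(k)^{k}|)^{l}$ on $\overline{tG_1}$, the series for $P_kf$ converges absolutely there and
\[
\|P_kf\|_{\overline{tG_1}}\le\sum_{l\ge1}|c_{kl}|\,(t^{|k|}|\eta(k)^{k}|)^{l}\le\|f\|_{L_k}\sum_{l\ge1}(t/s)^{|k|l}=\frac{(t/s)^{|k|}}{1-(t/s)^{|k|}}\,\|f\|_{L_k}\le\frac{t}{s-t}\,\|f\|_{L_k},
\]
the last step because $|k|\ge1$ and $x\mapsto x/(1-x)$ is increasing on $(0,1)$. Thus $\|(A\circ P_k)f\|_K\le C\|f\|_{L_k}$ with $C:=C_0\,t/(s-t)$, independent of $k$; and $\bigcup_{k\in M}L_k\subseteq\overline{sG_1}$, a compact subset of $G_1$, so $\bigcup_{k\in M}L_k\Subset G_1$, as required.

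\emph{The extension.} Observe that $\Delta(k)$ is itself a bounded complete $n$-circled domain, so Proposition~\ref{dalej}, applied to it, provides a continuous projection of $H(\Delta(k))$ onto $H_k(\Delta(k))$, which I again denote $P_k$. Because $\sup_{\Delta(k)}|z^{k}|=|\eta(k)^{k}|=\sup_{G_1}|z^{k}|$, a series $\sum_l c_lz^{kl}$ converges locally uniformly on $\Delta(k)$ if and only if it does on $G_1$, so $H_k(\Delta(k))=H_k(G_1)$ as vector spaces and, by the proposition identifying the two induced topologies on $H_k$, also topologically; in particular $H_k(\Delta(k))=H_k(G_1)\subseteq H(G_1)$. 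Hence the composition $A\circ P_k\colon H(\Delta(k))\xrightarrow{\,P_k\,}H_k(G_1)\hookrightarrow H(G_1)\xrightarrow{\,A\,}H(G_2)$ is a continuous, hence bounded, linear operator; it sends a monomial $z^m$ to $A(z^m)$ when $m=kl$ for some $l\in\mathbb{N}$ and to $0$ otherwise, so it agrees with the original $A\circ P_k$ on every polynomial, and it restricts to $A\circ P_k$ along $H_k(G_1)\hookrightarrow H(\Delta(k))$. Since polynomials are dense both in $H(\Delta(k))$ and in $H(G_1)$, it is the unique continuous operator $H(\Delta(k))\to H(G_2)$ with this property; this is the claimed extension.

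\emph{Main obstacle.} The step I expect to be delicate is not any single computation but making the constant $C$ uniform in $k$ while the test sets $L_k$ vary with $k$ and simultaneously keeping $\bigcup_{k\in M}L_k$ relatively compact in $G_1$. What makes this work is the interplay between the geometry of the polydiscs $\Delta(k)$ and of $G_1$—that $\overline{\Delta(k)}\subseteq\overline{G_1}$, whence $\overline{s\Delta(k)}\subseteq\overline{sG_1}$—together with the observation that the geometric-series factor $\frac{(t/s)^{|k|}}{1-(t/s)^{|k|}}$ is bounded above by $\frac{t}{s-t}$ for all $k\in M$, where it is essential that every $k\in M$ has $|k|\ge1$.
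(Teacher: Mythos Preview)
Your derivation of the displayed estimate is essentially the paper's: both pass from continuity of $A$ to the projection bound $\lVert P_kf\rVert_{rG_1}\le\frac{r^{|k|}}{s^{|k|}-r^{|k|}}\lVert f\rVert_{s\Delta(k)}$ of Proposition~\ref{dalej} (which you re-derive directly via Cauchy's inequalities), set $L_k=\overline{s\Delta(k)}$, and observe that $\frac{r^{|k|}}{s^{|k|}-r^{|k|}}\le\frac{r}{s-r}$ for $|k|\ge 1$ gives the uniformity in $k$.

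For the extension, however, you take a genuinely different and more elementary route. The paper extends $A\circ P_k$ via Hahn--Banach to an operator $B(s\Delta(k))\to B(tG_2)$, constructs the associated holomorphic kernel $a_k$ on $\Delta(k)^b\times G_2$ as in the proof of Theorem~\ref{pierwsze twierdzenie}, and then recovers the operator on $H(\Delta(k))$ through Corollary~\ref{wniosek}. You instead exploit the identification $H_k(\Delta(k))=H_k(G_1)$ (as topological vector spaces, by the proposition immediately preceding Proposition~\ref{dalej}) to write down the extension explicitly as the composite $H(\Delta(k))\xrightarrow{P_k}H_k(G_1)\hookrightarrow H(G_1)\xrightarrow{A}H(G_2)$, with uniqueness coming from density of polynomials. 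Your argument stays entirely within Section~4 and avoids Hahn--Banach and the kernel machinery; the paper's approach, by contrast, exhibits the extended operator through a holomorphic kernel on $\Delta(k)^b\times G_2$, which ties the result into the representation framework of Section~3 even though the bare theorem statement does not require that information.
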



\begin{proof}
Fix $k\in M$ and set
$$A_k:=A\circ P_k:H(G_1)\rightarrow H(G_2).$$ 
Since $A$ is continuous, for every $0<t<1$ there exist $0< r<1$ and $\varepsilon>0$ such that
$$A\big{(}\{f\in H(G_1):\lVert f\rVert_{r G_1}\leq\varepsilon\}\big{)}\subset\{g\in H(G_2):\lVert g\rVert_{tG_2}\leq 1\}.$$
Choose $s\in (r,1)$. Then for any $f\in H(G_1)$ we obtain
\begin{equation}\label{rownosc}
\lVert A_k f\rVert_{t G_2}\leq\frac{1}{\varepsilon}\lVert P_k f \rVert_{r G_1} \overset{\textup{(\ref{oszacowanie projekcji})}}{\leq} \frac{r^{|k|}}{\varepsilon (s^{|k|}-r^{|k|})}\rVert  f\rVert_{s \Delta(k)}.
\end{equation}
It is therefore enough to set 
$$C:=\frac{r}{\varepsilon(s  -r)},\ \ \ \ L_k:=s\Delta(k).$$ 

It remains to establish the final part of the statement. By the Hahn-Banach Theorem $A_k$ extends to a continuous operator $\widetilde{A}_{k,r}:B(s\Delta(k))\rightarrow B(tG_2)$. 
Arguing as in the proof of Theorem \ref{pierwsze twierdzenie}, one shows that  
$$a_{k,r}(\zeta,z):=\widetilde{A}_{k,r}\left(\frac{1}{w_1-\zeta_1}\cdot\ldots\cdot\frac{1}{w_n-\zeta_n}\right) (z),\ \ \ (\zeta,z)\in s\overline{\Delta}(k)^{\,c} \times tG_2,$$
is holomorphic on $s\overline{\Delta}(k)^{\,c} \times tG_2$ and vanishes at points where at least one coordinate is at infinity. Moreover, since 
$$a_{k,r}(\zeta,z)=A_k\left(\frac{1}{w-\zeta}\right)(z),\ \ (\zeta,z)\in \overline{\mathbb{P}}_n(\alpha)^c \times tG_2,$$
where $\alpha=(\alpha_1,\ldots,\alpha_n),\ \alpha_j:=\sup\{|z_j|:\,z=(z_1,\ldots,z_n)\in G_1\}$, the function 
$$a_k:\Delta(k)^c\times G_2\rightarrow\mathbb{C},\ \ \ a_k=a_{k,r}\ \ \textup{on}\ \ s\overline{\Delta}(k)^{\,c}\times tG_{2},$$
 is holomorphic  in $\Delta(k)^{\,c} \times G_2$.

Finally, by Corollary \ref{wniosek}, $a_k$ corresponds to a unique bounded operator $\widetilde{A}_k\in\mathcal{L}(H(\Delta(k)),H(G_2))$.
\end{proof}

The converse of the preceding theorem is also true and can be stated as follows.

\begin{thm}\label{thm:4.9}
Let $G_1,\,G_2\subset\mathbb{C}^n$ be bounded, complete $n$-circled domains and let $M$ denote the index set corresponding to $G_1$. Let $(A_k\colon H(\Delta(k))\rightarrow H(G_2))_{k\in M}$ be a sequence of bounded linear operators satisfying the following condition: for every compact set $K\subset G_2$ there exist a positive constant $C$ and a family of compact sets $\{L_k\}_{k\in M}$ with $L_k\Subset\Delta(k)$ and $\bigcup_{k\in M} L_k\Subset G_1$ such that 
\[
\lVert A_k f\rVert_{K} \leq C \lVert f\rVert_{L_k}, \quad f\in H(\Delta(k)).
\] 
Then there exists a unique bounded linear operator $A\in\mathcal{L}(H(G_1),H(G_2))$ such that 
\[
A\big|_{H_k(G_1)} \equiv A_k \rvert_{H_k(G_1)} \qquad \text{for every } k\in M.
\]
\end{thm}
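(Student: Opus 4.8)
The plan is to build $A$ by the only formula it could have, namely superposition of the $A_k$ along the canonical splitting $\mathbb{I}\textup{d}_{G_1}=P_0+\sum_{k\in M}P_k$ of Proposition~\ref{need}, and then to check, one at a time, convergence of that superposition, continuity of $A$, the restriction identities, and uniqueness. By the proposition identifying the topology of $H_k(G_1)$ with the one inherited from $H(\Delta(k))$, each $H_k(G_1)$ is a subspace of $H(\Delta(k))$ carrying the same topology, so $A_k(P_kf)\in H(G_2)$ makes sense for $f\in H(G_1)$; I would then set
\[
Af:=A_0(P_0f)+\sum_{k\in M}A_k(P_kf),\qquad f\in H(G_1),
\]
where the index $0$ is included, with $\Delta(0)$, $H_0(G_1)$, $P_0$ as introduced above (since $H_0(G_1)$ is one--dimensional this merely prescribes the single function $A_0(1)\in H(G_2)$). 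Linearity of $A$ will then be automatic once the series is shown to converge in $H(G_2)$.

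The heart of the argument, and the only step I expect to require genuine work, is a single uniform estimate yielding both convergence and continuity. Fix a compact set $K\subset G_2$ and let $C>0$ and $\{L_k\}_{k\in M}$ be as in the hypothesis, so $\lVert A_kg\rVert_K\le C\lVert g\rVert_{L_k}$ for $g\in H(\Delta(k))$. Because $G_1$ is a bounded complete $n$-circled domain one has $G_1=\bigcup_{0<t<1}tG_1$ with the dilates $tG_1$ open and increasing in $t$, and $t\overline{G_1}\subset G_1$ for $0<t<1$; since $\bigcup_{k\in M}L_k\Subset G_1$ there is therefore $r_0\in(0,1)$ with $\bigcup_{k\in M}L_k\subset r_0G_1$. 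Fix $s\in(r_0,1)$ and put $\theta:=r_0/s<1$. Starting from the representation of $P_kf$ in Remark~\ref{uwaga}(2), evaluated on the contour $s\,\partial_0\Delta(k)$ (which lies in $s\overline{G_1}\subset G_1$, since $\eta(k)\in\overline{G_1}$ and $\overline{G_1}$ is $n$-circled), and using that $|z^k|\le r_0^{|k|}|\eta(k)^k|$ for $z\in r_0G_1$ while $|\zeta^k|=s^{|k|}|\eta(k)^k|$ for $\zeta\in s\,\partial_0\Delta(k)$ — here $|\eta(k)^k|=\sup_{z\in\overline{G_1}}|z^k|$ by the maximum principle — the computation behind \eqref{oszacowanie projekcji} (as in the proofs of Propositions~\ref{dalej} and~\ref{need}) gives
\[
\lVert P_kf\rVert_{r_0G_1}\le\frac{\theta^{|k|}}{1-\theta^{|k|}}\,\lVert f\rVert_{s\overline{G_1}}\le\frac{\theta^{|k|}}{1-\theta}\,\lVert f\rVert_{s\overline{G_1}},\qquad k\in M.
\]
Combining this with $\lVert A_k(P_kf)\rVert_K\le C\lVert P_kf\rVert_{L_k}\le C\lVert P_kf\rVert_{r_0G_1}$, with $\sum_{k\in M}\theta^{|k|}\le\sum_{m\ge0}\binom{n+m-1}{n-1}\theta^m=(1-\theta)^{-n}<\infty$, and with the trivial bound $\lVert A_0(P_0f)\rVert_K=|f(0)|\,\lVert A_0(1)\rVert_K$ for the constant part, one obtains $\lVert Af\rVert_K\le C'\lVert f\rVert_{s\overline{G_1}}$ for a constant $C'=C'(K)$. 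As $s\overline{G_1}$ is a compact subset of $G_1$ and $K$ was an arbitrary compact subset of $G_2$, this shows simultaneously that the series for $Af$ converges in $H(G_2)$ and that $A\in\mathcal{L}(H(G_1),H(G_2))$.

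It remains to check the restriction identities and uniqueness, both of which are formal. If $g\in H_k(G_1)$, write $g(z)=\sum_{l=1}^\infty c_lz^{kl}$; since $k$ is a primitive vector, an equality $kl=jl'$ with $l,l'\ge1$ forces $j=k$, so, the projections being continuous and hence acting termwise on the series, $P_jg=0$ for every $j\in(M\cup\{0\})\setminus\{k\}$ and $P_kg=g$, whence $Ag=A_k(P_kg)=A_kg$; this is precisely $A|_{H_k(G_1)}\equiv A_k|_{H_k(G_1)}$. Finally, if $B\in\mathcal{L}(H(G_1),H(G_2))$ has the same restrictions, then $A$ and $B$ agree on every $H_k(G_1)$ and on $H_0(G_1)$, hence on $P_0f$ and on each $P_kf$; since $f=P_0f+\sum_{k\in M}P_kf$ converges in $H(G_1)$ by Proposition~\ref{need} and $A,B$ are continuous, $Af=Bf$ for every $f\in H(G_1)$, which proves uniqueness.
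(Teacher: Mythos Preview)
Your construction and estimates match the paper's approach exactly: define $A$ by superposing the $A_k\circ P_k$, feed in the projection estimate \eqref{oszacowanie projekcji} to obtain geometric decay $\theta^{|k|}$, and sum over $M$ to get simultaneous convergence and continuity. The paper's write-up is terser (and slightly sloppier about whether the intermediate norm is taken over $t\Delta(k)$ or $tG_1$), but the mechanism is identical.

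One discrepancy worth flagging: in the paper's conventions $0\notin M$ (note the separate introduction of $\Delta(0)$, $H_0$, $P_0$ and the form $\mathbb{I}\textup{d}=P_0+\sum_{k\in M}P_k$ of Proposition~\ref{need}), so the hypothesis supplies no operator $A_0$. The paper accordingly sets $Af:=\sum_{k\in M}(A_k\circ P_k)f$, which forces $A(\mathbf 1)=0$; your extra term $A_0(P_0f)$ is not determined by the data. This also breaks your uniqueness argument: nothing in the hypothesis pins down $B|_{H_0(G_1)}$, and in fact uniqueness as stated is false (add $g\cdot P_0$ to $A$ for any fixed $g\in H(G_2)$). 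The paper's proof does not attempt uniqueness either, so this is a gap in the theorem statement rather than in your reasoning.
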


\begin{proof}
Define
$$Af:=\sum_{k\in M}(A_k\circ P_k )f,\ \ \ \ f\in H(G_1).$$
Fix $0<t<s<1$. Recall that for $k\in M$ we have
$$\lVert P_k f\rVert_{tG_1} \leq \frac{t^{|k|}}{s^{|k|}-t^{|k|}}\lVert f\rVert_{s\Delta(k)},\ \ f\in H(G_1).$$
By the definition of $A$, it  satisfies $A\rvert_{H_k(G_1)}\equiv (A_k)\rvert_{H_k(G_1)}$ for all $k\in M$. The boundedness of $A$ is a direct consequence of the estimate:
\begin{multline*}
\lVert Af\rVert_{r G_2}\leq\ \sum_{k\in M}\lVert A_k(P_k f)\rVert_{r G_2}\leq C\sum_{k\in M}\lVert P_k f\rVert_{t \Delta(k)}  
\leq C\sum_{k\in M}\frac{t^{|k|}}{s^{|k|}-t^{|k|}}\lVert f\rVert_{s \Delta(k)} \\
\leq C\left\{\binom{n+N-1}{n}\frac{t}{s-t}+2\sum_{m=N}^\infty\binom{m+n-1}{n-1}\big{(}\frac{t}{s}\big{)}^m\right\}\,\lVert f\rVert_{s G_1},
\end{multline*}
where $N\in\mathbb{N}$ is chosen so that $2\big{(}\frac{t}{s}\big{)}^N\leq 1$. Thus, $A\in\mathcal{L}(H(G_1),H(G_2))$. 
\end{proof}

We define
\[
\mathcal{L}_0(H(G_1),H(G_2)) \;:=\; \bigl\{\, L \in \mathcal{L}(H(G_1),H(G_2)) : L(\mathbf{1}) = 0 \,\bigr\},
\]
where $\mathbf{1}$ denotes the constant function $z \mapsto 1$ on $G_1$.

\begin{corollary}
Let $G_1, G_2 \subset \mathbb{C}^n$ be bounded, complete $n$-circled domains, and let 
$A\colon H(G_1) \to H(G_2)$ be a bounded operator. Suppose that the index set $M$ corresponding to $G_1$ is finite. Then there are natural isomorphisms of Fréchet spaces
\[
\mathcal{L}_0(H(G_1), H(G_2)) \ \cong\ 
\bigoplus_{k\in M} \mathcal{L}(H_k(G_1), H(G_2)) .
\]
\end{corollary}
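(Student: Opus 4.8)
The plan is to realise the asserted isomorphism as
\[
\mathcal{L}_0(H(G_1),H(G_2))\ni L\ \longmapsto\ \bigl(L|_{H_k(G_1)}\bigr)_{k\in M},
\]
with inverse $(L_k)_{k\in M}\mapsto\sum_{k\in M}L_k\circ P_k$. Everything reduces to the fact that, when $M$ is finite, $H(G_1)$ is the \emph{topological} direct sum $H_0(G_1)\oplus\bigoplus_{k\in M}H_k(G_1)$; the corollary then follows by applying $\mathcal{L}(\,\cdot\,,H(G_2))$ and cutting out the subspace on which the constants are annihilated. The operator $A$ in the statement is incidental: it is just an element of the left-hand space, sent to $(A|_{H_k(G_1)})_{k\in M}$.

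First I would establish the decomposition of $H(G_1)$. By Proposition \ref{need}, $\mathbb{I}\textup{d}_{G_1}=P_0+\sum_{k\in M}P_k$, and since $M$ is finite this is an honest finite sum of continuous linear idempotents — the $P_k$ are continuous by Proposition \ref{dalej}, and $P_0$ by the maximum principle. On power series $P_k$ retains exactly the monomials $z^{kl}$ with $l\geq 1$, so for distinct $k,j\in M$ we get $P_kP_j=0$ (no multi-index is simultaneously a positive integer multiple of two distinct primitive vectors), and likewise $P_0P_k=P_kP_0=0$. Since $H_0(G_1)$ and each $H_k(G_1)$ are closed in $H(G_1)$ (Remark \ref{uwaga}), the projections $P_0,(P_k)_{k\in M}$ exhibit
\[
H(G_1)\ =\ H_0(G_1)\ \oplus\ \bigoplus_{k\in M}H_k(G_1)
\]
as a finite topological direct sum of Fréchet spaces: the continuous linear bijection $(f_0,(f_k)_k)\mapsto f_0+\sum_k f_k$ from $H_0(G_1)\times\prod_{k\in M}H_k(G_1)$ onto $H(G_1)$ has continuous inverse $f\mapsto(P_0f,(P_kf)_k)$.

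Next I would pass to operators. For a finite topological direct sum of Fréchet spaces the bounded sets are precisely the sums of bounded sets of the summands, so the restriction map
\[
L\ \longmapsto\ \bigl(L|_{H_0(G_1)},(L|_{H_k(G_1)})_{k\in M}\bigr)
\]
is a topological isomorphism $\mathcal{L}(H(G_1),H(G_2))\cong\mathcal{L}(H_0(G_1),H(G_2))\times\prod_{k\in M}\mathcal{L}(H_k(G_1),H(G_2))$, with inverse $(L_0,(L_k)_k)\mapsto L_0\circ P_0+\sum_{k\in M}L_k\circ P_k$; each right-hand factor is Fréchet, $H_k(G_1)$ being a closed subspace of the Fréchet space $H(G_1)$ and the operator spaces carrying the topology used in Theorem \ref{pierwsze twierdzenie}. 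Finally $H_0(G_1)=\mathbb{C}\cdot\mathbf{1}$, so $L(\mathbf{1})=0$ if and only if $L|_{H_0(G_1)}=0$; hence $\mathcal{L}_0(H(G_1),H(G_2))$ corresponds exactly to $\{0\}\times\prod_{k\in M}\mathcal{L}(H_k(G_1),H(G_2))$, which equals $\bigoplus_{k\in M}\mathcal{L}(H_k(G_1),H(G_2))$ because $M$ is finite.

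None of the steps is deep, but three deserve care: the orthogonality relations $P_kP_j=0$, resting solely on the primitivity built into $M$; the verification that the decomposition of $H(G_1)$ is \emph{topological} and not merely algebraic (immediate from continuity of the $P_k$ and the open mapping theorem, since $M$ is finite); and — the real bookkeeping — fixing the topology on the operator spaces so that the conclusion is an isomorphism of Fréchet spaces rather than of vector spaces. I expect this last point to be the only genuine obstacle; with the topological direct sum of $H(G_1)$ in hand, everything else is formal.
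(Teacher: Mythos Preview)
The paper gives no proof for this corollary; it is meant to follow from Theorems~\ref{thm:4.8} and~\ref{thm:4.9}, and your argument is correct and amounts to exactly that: the maps $A\mapsto (A|_{H_k(G_1)})_{k\in M}$ and $(A_k)_k\mapsto\sum_{k\in M}A_k\circ P_k$ are precisely those of the two theorems, with the uniform estimates becoming vacuous once $M$ is finite. Your reformulation via the topological direct sum $H(G_1)=H_0(G_1)\oplus\bigoplus_{k\in M}H_k(G_1)$ is the natural way to package this, and your caution about pinning down the Fr\'echet structure on the operator spaces is well-placed --- it is a point the paper itself leaves implicit.
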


\vspace{1cm}
{\bf Declarations}

{\bf Data Availability Statement.} No data was used for the research described in the article.

{\bf Conflict of interest} The author states that there is no conflict of interest.

\end{document}